\documentclass{cmslatex}
\usepackage[paperwidth=7in, paperheight=10in, margin=.875in]{geometry}
\usepackage[backref,colorlinks,linkcolor=red,anchorcolor=green,citecolor=blue]{hyperref}
\usepackage{amsfonts,amssymb}
\usepackage{amsmath}
\usepackage{graphicx}
\usepackage{cite}
\usepackage{enumerate}
\usepackage{multirow}
\usepackage{etex}
\usepackage[latin1]{inputenc}
\usepackage[T1]{fontenc}
\usepackage{amsfonts}
\usepackage{amssymb, amsmath}
\usepackage{geometry}
\usepackage{paralist}
\usepackage{lscape}
\usepackage[english]{babel} 
\usepackage{ifthen}
\usepackage{bbm}		
\usepackage{graphicx}
\usepackage{lmodern}
\usepackage{algorithmic}
\usepackage{float}
\usepackage{wrapfig} 
\usepackage{textcomp}
\usepackage{graphicx}
\usepackage{cancel}
\usepackage{mathrsfs}
\usepackage{upgreek}
\usepackage{url}
\usepackage{tikz}	
\usepackage{textpos}
\usepackage{mathtools} 
\usepackage{ragged2e}
\usepackage{tabularx}
\usepackage{nicefrac}
\usepackage{soul}
\usepackage{yhmath}

\renewcommand{\theequation}{\arabic{section}.\arabic{equation}}

\def \h{\textup{\textbf{h}}}
\def \u{\textup{\textbf{u}}}

\def \T{\mathsf{T}}

\def \veq{v^{\textup{eq}}}
\def \ueq{\textup{\textbf{u}}^{\textup{eq}}}
\def \feq{f^{\textup{eq}}}
\def \lambdaeq{\lambda^{\textup{eq}}}
\def \lambdaf{\lambda^{\textup{f}}}

\def \d{\textup{T},}
\def \q{a}
\def \entropy{\eta}
\def \entropyflux{\psi}

\def \R{\mathcal{R}}

\def \HAR{\textup{\textbf{H}}}
\def \d{\,\textup{d}}
\def \WAR{\textup{\textbf{W}}}

\def \wAR{\omega^{\textup{ARZ}}}

\def \v1{v^{(1)}}

\DeclareFontFamily{U}{mathx}{\hyphenchar\font45}
\DeclareFontShape{U}{mathx}{m}{n}{
	<5> <6> <7> <8> <9> <10> <10.95> <12> <14.4> <17.28> <20.74> <24.88> mathx10
}{}
\DeclareSymbolFont{mathx}{U}{mathx}{m}{n}
\DeclareMathAccent{\widebar}{0}{mathx}{"73}

\allowdisplaybreaks
\begin{document}
	\title{Micro-to-macro derivation and stability analysis of an  Aw-Rascle-Zhang-type model with driver-dependent acceleration}
	
	
	\author{Stephan Gerster\thanks{Department of Mathematics ``Guido Castelnuovo'', Sapienza~University~of~Rome, (stephan.gerster@gmail.com).}
		\and Giuseppe Visconti\thanks{Department of Mathematics ``Guido Castelnuovo'', Sapienza~University~of~Rome, (giuseppe.visconti@uniroma1.it).}}

	\pagestyle{myheadings} \markboth{The PIU model}{Stephan Gerster and Giuseppe Visconti} \maketitle
	
	\begin{abstract}
We introduce a new  traffic flow model.  The main idea is to describe acceleration effects in the Aw-Rascle-Zhang model through an additional driver-dependent equation, thereby allowing individual driver characteristics to evolve dynamically rather than being prescribed solely by the traffic density. The resulting system provides a hyperbolic description of traffic dynamics and admits an explicit analysis of its characteristic structure, Riemann problem, and entropy properties. To investigate the stability of the proposed model, we perform a Chapman-Enskog expansion up to second order and derive the corresponding diffusive and higher-order corrections. This analysis reveals how the stability properties depend on the underlying velocity and pressure functions. Several examples are presented to illustrate the theoretical results.

	\end{abstract}
	
	\begin{keywords}  
		Hyperbolic systems;  vehicular flow; traffic hesitation; macroscopic limit; relaxation 
	\end{keywords}
	
	\begin{AMS} 
		35L25; 76A30
	\end{AMS}
	
	\section{Introduction.} \label{sec:intro}
Mathematical models of traffic flow provide a framework for describing the collective dynamics of vehicles at different scales. Depending on the level of detail, traffic models are commonly formulated at the microscopic, mesoscopic~\cite{Dimarco2019,Dimarco2021}, or macroscopic scale~\cite{Gong:2023,Piu:2022,Hayat:2023,Albeaik:2022,Marques:2013,Iannini:2016,Lu:2025}. Microscopic models describe the motion of individual vehicles and their interactions with the vehicles ahead, while macroscopic models describe traffic through averaged quantities such as density and velocity. A fundamental question is how the macroscopic evolution equations can be consistently derived from the underlying microscopic or mesoscopic vehicle dynamics~\cite{Borsche:2018,Burger:2018,Cristiani:2016,Cardaliaguet:2021,Dimarco2019,Dimarco2021}.

A classical example of a microscopic traffic model is the Follow-the-Leader  model~\cite{Gazis:1961}, in which the acceleration of a vehicle depends on the distance to the vehicle immediately ahead. Such models provide a direct description of individual vehicle interactions, but their discrete nature makes them less suitable for describing traffic evolution on macroscopic spatial and temporal scales. The derivation of macroscopic equations from microscopic car-following models therefore provides an important link between individual vehicle dynamics and continuum traffic flow.

Among macroscopic traffic models, the Lighthill--Whitham--Richards (LWR) model~\cite{Lighthill:1955,Richards:1956} is the classical first-order description. It assumes that the velocity is determined instantaneously by the local density through an equilibrium velocity-density relation. Although this assumption leads to a particularly simple conservation law, it does not explicitly describe the dynamics of velocity and requires the equilibrium relation to be prescribed in advance.

Second-order models extend this framework by treating velocity as an additional dynamic variable. In particular, the Aw--Rascle--Zhang (ARZ) model~\cite{Aw:2000,Zhang:2002} introduces a traffic hesitation function that modifies the relation between velocity and density. The ARZ model can be obtained as a macroscopic limit of suitable second-order microscopic models~\cite{Aw:2002}. This connection demonstrates that microscopic interaction rules can determine the structure of the corresponding macroscopic traffic equations.

In this work, we introduce a novel  traffic model, which provides a car-following description based on the interaction between consecutive vehicles. The presented \emph{PIU model} contains a velocity adaptation mechanism depending on the headway, and its structure allows the microscopic dynamics to be rewritten in terms of a suitable traffic pressure. This reformulation is particularly useful for identifying conserved quantities and for deriving a macroscopic model in the hydrodynamic limit.

The central objective is to establish the connection between the PIU microscopic dynamics and its corresponding macroscopic description. Starting from the ordinary differential equations governing the individual vehicles, we introduce the microscopic density and derive the evolution of the velocity and the associated traffic pressure. In particular, the microscopic dynamics lead 
to quantities that in Lagrangian coordinates  satisfy a transport equation, which provides the key structural relation for the macroscopic model.

Furthermore, we obtain a hyperbolic  hydrodynamic limit on the macroscopic level.  We then transform the system to Eulerian coordinates and investigate its conservative and non-conservative formulations. The resulting system possesses a structure that allows for an explicit analysis of its characteristic speeds, eigenvectors, and Riemann invariants. Under appropriate assumptions on the pressure function, the system exhibits a particularly simple wave structure.

An important part of the analysis concerns the Riemann problem associated with the macroscopic PIU model. We characterize the elementary wave families in terms of a Temple class system and derive the corresponding shock and rarefaction curves. The resulting structure allows us to determine the intermediate states and to obtain an explicit description of the Riemann solution. This provides a direct connection between the microscopic PIU interaction law and the wave propagation properties of the macroscopic traffic model. 
We further investigate the entropy structure of the macroscopic system. In particular, we construct entropy-entropy flux pairs. 

The PIU model also provides a useful framework for studying the relation between microscopic velocity adaptation and macroscopic traffic stability. By considering suitable constitutive relations, we obtain explicit expressions for the characteristic speeds and the corresponding coefficients in the macroscopic equations.

To further investigate the stability properties of the macroscopic PIU model, we use a Chapman-Enskog expansion, i.e.~an asymptotic expansion around the local equilibrium manifold with respect to the small relaxation parameter, which allows  to derive reduced macroscopic equations together with their dissipative higher-order corrections, see e.g.~\cite{ChenLevermoreLiu1994}. We carry out this expansion up to second order. This higher-order expansion allows us to identify not only the leading-order diffusive correction, but also the higher-order terms governing the stability 
and propagation of perturbations around the equilibrium state. In particular, we derive explicit expressions for the corrections and analyze the resulting coefficients in terms of the underlying velocity and pressure functions.

 Several examples of traffic relations are considered, including power-law and other phenomenological velocity-density relations. These examples illustrate how different microscopic interaction laws affect the resulting macroscopic traffic dynamics.

\medskip

\noindent
This paper is structured as follows: Section~\ref{SectionMicro} 
introduces the microscopic traffic flow model and the proposed PIU formulation. In Section~\ref{SecMacroDescription} the corresponding macroscopic model is derived and its characteristic structure, Riemann problem and entropy properties are analyzed. 
Section~\ref{SectionRelaxation} investigates relaxation and stability through a Chapman-Enskog expansion, including first- and second-order corrections. Section~\ref{SectionCalibration}
 presents different equilibrium calibrations and illustrates the resulting stability conditions. Finally, Section~\ref{SectionNumerics} illustrates numerically the presented results.

	\section{Microscopic description of traffic flow.}\label{SectionMicro}
	Most microscopic traffic flow models are based on a \textit{Follow-the-Leader}-ansatz, where the evolution of the position~$x_i(t)$ and the velocity~$v_i(t)$ of a car~$i$ at time~${t\geq 0}$ is described by ordinary differential equations~(ODEs) and depends only on the $(i+1)$-th vehicle ahead. In this section we briefly recall existing microscopic traffic flow models and introduce an extension of the microscopic description of the Aw-Rascle-Zhang model.
	
	\subsection{Follow-the-Leader models.}\label{SecARZmicro}
	A microscopic description of the car dynamics~\cite{Aw:2002} can be given by the following system of ODEs:
	\begin{equation}\label{FtL}
		\begin{cases}
			\begin{aligned}
				\dot{x}_i&=v_i, \\
				\dot{v}_i&=c_\gamma \frac{\dot{x}_{i+1}-\dot{x}_{i}}{(x_{i+1}-x_i)^{\gamma+1}},
			\end{aligned}
		\end{cases}
	\end{equation}
	where~$\dot{x} = x'(t)$ denotes the time derivative. 
	The local density around vehicle~$i$, which is a dimensionless quantity, and its inverse, the local normalized specific volume, are defined by
	$$
	\rho_i\coloneqq
	\frac{\Delta X}{x_{i+1}-x_i}
	\quad\text{and}\quad
	\tau_i\coloneqq \frac{1}{\rho_i},
	$$
	where~$\Delta X$ is the length of a car.
	The choices~$c_\gamma(\Delta X)\coloneqq (\Delta  X)^\gamma$ and $P(\rho)\coloneqq \nicefrac{\rho^\gamma}{\gamma}$ lead to the relation
	\begin{align*}	    
	-\dot{v}_i
	&=
	-
	c_\gamma(\Delta X) \frac{\dot{x}_{i+1}-\dot{x}_{i}}{(x_{i+1}-x_i)^{\gamma+1}}
	=
	c_\gamma(\Delta X)
	\frac{1}{(x_{i+1}-x_i)^{\gamma-1}}
	\bigg(
	-
	\frac{\dot{x}_{i+1}-\dot{x}_i}{(x_{i+1}-x_i)^2}
	\bigg) \\
	&=
	P'(\rho_i)\dot{\rho}_i
	=
	\dot{P(\rho_i)},
	\end{align*}
	where $\dot{P(\rho_i)} \coloneqq
	\frac{\textup{d}}{\textup{d}t}
	P\big( \rho_i(t) \big)$
	denotes the composite time derivative. Hence, the relation~$\dot{\omega}_i=0$ holds for~$\omega_i\coloneqq v_i+P(\rho_i)$. 
	Since the quantity~$\omega_i(t)=\omega_i(0)$ is constant over time, it is a driver-dependent quantity that corresponds to car~$i$. The macroscopic form reads as  
	\begin{equation}\label{ARZ}
		\begin{cases}
			\begin{aligned}
				\partial_t \rho + \partial_x(\rho v)&=0,\\
				\partial_t v+ \Big(v-P'(\rho)\rho\Big)\partial_x v
				&=0.
			\end{aligned}
		\end{cases} 
	\end{equation}
	
	\subsection{Multiclass models.}\label{SecMicroMulticlass}
	The idea of multiclass models~\cite{Bagnerini:2003} is to associate to each driver  an additional~information~$h_i$,  which accounts for different driving characteristics. For instance, a car accelerates faster than a truck. 
	Then, the Follow-the-Leader system~\eqref{FtL} is augmented with an additional equation and reads as
\begin{equation*}
		\begin{cases}
			\begin{aligned}
				\dot{x}_i&=v_i, \\
				\dot{\omega}_i&=0, \\
				\dot{h}_i &= 0
			\end{aligned}
		\end{cases}
		\text{with}
		\quad \
		\omega_i\coloneqq
		v_i+q(\rho_i,h_i).
	\end{equation*}
	Since the driver-dependent information~$h_i(t)=h_i(0)$ is constant over time, it can be seen as a known quantity that acts similarly to a parameter, which parametrizes the term~$q(\cdot,h_i)$. The \textbf{third-order model (TOM)}, which has been recently introduced in~\cite{Ostia26}, is based on the ansatz
	\begin{equation}\label{GVSMicroscopic}
		\begin{cases}
			\begin{aligned}
				\dot{x}_i&=v_i, \\
				\dot{\omega}_i&=0, \\
				\dot{h_i} &= \dot{\h(\rho_i)}
			\end{aligned}
		\end{cases}
		\text{with}
		\qquad
		\omega_i\coloneqq
		v_i+q(h_i).
	\end{equation}
	By rewriting the second equation of the proposed model~\eqref{GVSMicroscopic} in terms of velocity, i.e.
	$$
	\dot{v}_i= -
	\dot{q(h_i)}=
	-q'(h_i)
	\dot{h_i}
	=
	-q'(h_i)
	\dot{\h(\rho_i)}
	=
	-q'(h_i)
	\h'(\rho_i)
	\dot{\rho_i}
	=
	q'(h_i)
	\h'(\rho_i)
	\rho_i
	\frac{v_{i+1}-v_i}{x_{i+1}-x_i},
	$$
	the macroscopic form reads
	\begin{equation*}
		\begin{cases}
			\begin{aligned}
				\partial_t \rho + \partial_x(\rho v)&=0,\\
				\partial_t v+ \Big(v-q'(h)\h'(\rho)\rho\Big)\partial_x v
				&=0, \\
				\partial_t \big(h-\h(\rho)\big) + v \partial_x \big(h-\h(\rho)\big)&= 0. 
			\end{aligned}
		\end{cases} 
	\end{equation*}

	\subsection{Pressure-independent, univariate (PIU) closure.} 
	We extend the acceleration coefficient~$c_\gamma$ in the microscopic description of the Aw-Rascle-Zhang model~\eqref{FtL} to include  a driver-dependent property. It describes only driver-dependent information, for instance the fact that a car accelerates faster than a truck. It is independent of the traffic pressure and local density of cars. 
	The univariate~function~$a(h_i)$ requires a univariate closure, which is included as~$\dot{h}_i=0$. The complete system reads as
	\begin{equation}\label{AnsatzPiu}
		\begin{cases}
			\begin{aligned}
				\dot{x}_i&=v_i, \\
				\dot{v}_i&= c_\gamma(\Delta X)
				\q(h_i) \Big(
				\frac{\rho_i}{\Delta X}
				\Big)^\gamma  \frac{v_{i+1}-v_{i}}{x_{i+1}-x_i},\\
				\dot{h}_i &= 0.
			\end{aligned}
		\end{cases}
	\end{equation}
	The microscopic description~\eqref{AnsatzPiu} allows for the conservative form
	\begin{equation}\label{PIUconservative}
		\begin{cases}
			\begin{aligned}
				\dot{x}_i&=v_i, \\
				\dot{\omega}_i&=0, \\
				\dot{h}_i &= 0
			\end{aligned}
		\end{cases}
		\text{with}
		\quad \
		\omega_i\coloneqq
		v_i+\q\big(h_i(0)\big)P(\rho_i)
		\quad \text{and} \quad
		P(\rho_i)\coloneqq \begin{cases}
			\ln(\rho_i) & \text{if } \gamma =0, \\
			\frac{\rho_i^\gamma}{\gamma}  & \text{if } \gamma>0. 
		\end{cases}
	\end{equation}
	
	\section{Macroscopic description of traffic flow.}\label{SecMacroDescription}
	The macroscopic description of the microscopic Follow-the-Leader and multiclass models in Section~\ref{SecARZmicro} and~\ref{SecMicroMulticlass} has been ensured by a hydrodynamic limit~\cite{Aw:2002,Bagnerini:2003}. The same arguments yield the corresponding formulation of the proposed microscopic model \eqref{PIUconservative} on a macroscopic level, when the number of cars tends to infinity and their size~$\Delta X\rightarrow 0$ shrinks to zero. 
	We emphasize that the  proofs~\cite{Aw:2002,Bagnerini:2003} are based on the fact that the systems in Lagrangian coordinates are conservative. 
	This issue would be non-trivial if~the term $h_i$ changed over time, which is the case in Eulerian coordinates. In Lagrangian coordinates, however, the driver-dependent quantities~$h_i(t)=h_i(0)$ are constant. 
	In particular, we directly obtain from the system of ordinary differential equations~$\dot{\omega}_i=0$ and~$\dot{h}_i=0$ the partial differential equations~$\partial_T \big[v + a(h)P(\rho)\big]=0$ and $\partial_T h =0$.  
	Here, the variables $(T,X)$ denote the Lagrangian time and space coordinates. Those are related to the Eulerian~$(t,x)$ by~$X(t,x) = \int^x \rho(t,\xi) \d \xi$. 
	By including the conservation of mass, the hydrodynamic limit~$i\Delta X\to X$  reads in Lagrangian coordinates as
	\begin{equation}\label{PiuLagrange}
		\begin{cases}
			\begin{aligned}
				\partial_T \tau(T,X) - \partial_X v(T,X) \ \ &=0,\\
				\partial_T v(T,X) -
				\frac{\q(h)}{\tau^{\gamma}} \big(T,X\big)
				\frac{\partial_X v}{\tau} (T,X)&=0,\\
				\partial_T h(T,X) &=0
			\end{aligned}
		\end{cases}
		\text{for}\quad
		\tau\coloneqq
		\frac{1}{\rho}.
	\end{equation}	
	Note that this Lagrangian form is directly seen from the microscopic description~\eqref{AnsatzPiu} by considering the limit
	\begin{equation*}	
		\frac{v_{i+1}-v_{i}}{x_{i+1}-x_i}
		\rightarrow 
		\partial_x v=
		\frac{\partial_X v}{\tau} 
		\quad\text{for}\quad \Delta X \rightarrow 0.
	\end{equation*} 
	However, this is in general not proven 
	and we make no use of the  convergence with respect to  non-conservative forms. 
	Following the lines of~\cite{Aw:2002,Bagnerini:2003},  the system~\eqref{PiuLagrange} in Eulerian coordinates reads as
	\begin{equation}\label{PiuS1}\tag{$\mathcal{P}1$}
		\begin{cases}
			\begin{aligned}
				\partial_t \rho + \partial_x(\rho v)&=0,\\
				\partial_t v+ \Big(v-a(h)P'(\rho)\rho \Big)\partial_x v
				&=0, \\
				\partial_t h + v \partial_xh&= 0. 
			\end{aligned}
		\end{cases} 
	\end{equation}

	\noindent
	Hence, the system~\eqref{PiuS1} can be written in the quasilinear form 
	$$
	\partial_t \u + Q(\u) \partial_x \u=0 
	\quad\text{with}\quad 
	Q(\u)
	=
	\begin{pmatrix}
		v & \rho & 0 \\
		0 & v-a(h)P'(\rho)\rho & 0     \\
		0 & 0 & v
	\end{pmatrix}
	\quad\text{and}\quad 
	\u=
	\begin{pmatrix}
		\rho \\ v \\ h
	\end{pmatrix}.
	$$
	The eigenvalue decomposition
	$L(\u) Q(\u)  R(\u)= D(\u) 
	$ 
	reads as
	\begin{align*}
		R(\u)&=
		\begin{pmatrix}
			1 & 0 & \rho \\
			0 & 0 & -a(h)P'(\rho)\rho   \\
			0 & 1 & 0
		\end{pmatrix}, \\
		L(\u)&=
		\big[a(h)P'(\rho)\rho\big]^{-1}
		\begin{pmatrix}
			a(h)P'(\rho)\rho & \rho & 0 \\
			0 & 0              & a(h)P'(\rho)\rho \\
			0 & -1 & 0
		\end{pmatrix}
	\end{align*}
	with  eigenvalues 
	$D(\u) = \diag\big\{ v,v,v-a(h)P'(\rho)\rho \big\}$ 
	that are labeled as 
	$$\lambda_1(\rho,v,h)=v-a(h)P'(\rho)\rho
	\quad\text{and}\quad \lambda_2(\rho,v,h) = \lambda_3(\rho,v,h) = \lambda_v(v) \coloneqq v.$$
	Define the quantities
	$$
	z=\rho\omega,\quad
	w=\rho h \quad
	\text{with}
	\quad
	\omega = v+a(h)P(\rho) 
	\quad
	\text{and}
	\quad
	v(\rho,z,w)
	=
	\frac{z}{\rho}-a\Big(\frac{w}{\rho}\Big) P(\rho). 
	$$
	It has also the conservative form, which is proven in the Lemma~\ref{LemmaConservative}
	\begin{equation}\label{PiuS3}\tag{$\mathcal{P}2$}
		\begin{cases}
			\begin{aligned}
				\partial_t \rho  + \partial_x\Big(\rho v(\rho,z,w) \Big)&=0,\\
				\partial_t z
				+
				\partial_x \Big(
				z v(\rho,z,w) 
				\Big)
				&=0, \\
				\partial_t w +
				\partial_x \Big(
				w v(\rho,z,w) 		 
				\Big)&= 
				0.	\end{aligned}
		\end{cases}
		\text{ or }\quad
		\begin{cases}
			\begin{aligned}
				\partial_t \rho + \partial_x\Big(\rho v(\rho,z,w)\Big)&=0,\\
				\partial_tz
				+
				\partial_x \Big(
				\frac{z^2}{\rho}
				-z
				a(\nicefrac{w}{\rho}) P(\rho)
				\Big)
				&=0, \\
				\partial_t w +
				\partial_x \Big(wv(\rho,z,w) 
				\Big)&= 
				0.	\end{aligned}
		\end{cases}  
	\end{equation}

	\begin{lemma}[Conservative form]\label{LemmaConservative}
		The system~\eqref{PiuS1} is for smooth solutions equivalent to the conservative formulation~\eqref{PiuS3}. Furthermore, it holds
		$$
		\partial_t \omega + v\partial_x \omega = 0 
		\quad\text{for}\quad
		\omega = v + a(h) P(\rho).
		$$	
	\end{lemma}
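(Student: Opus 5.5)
First we prove the transport equation for $\omega$, since it drives the equivalence. Write $D_t \coloneqq \partial_t + v\partial_x$ for the material derivative. For smooth solutions of \eqref{PiuS1}, the third equation is $D_t h = 0$. The mass equation gives $D_t\rho = -\rho\,\partial_x v$. The chain rule then yields
\[
D_t\omega = D_t v + a'(h)P(\rho)\,D_t h + a(h)P'(\rho)\,D_t\rho = D_t v - a(h)P'(\rho)\rho\,\partial_x v .
\]
The second equation of \eqref{PiuS1} says precisely $D_t v = a(h)P'(\rho)\rho\,\partial_x v$, so $D_t\omega=0$. This is the claimed identity.

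Next we show that \eqref{PiuS1} implies \eqref{PiuS3}. We use the elementary identity, valid for any smooth scalar $\phi$ when $\rho$ satisfies the continuity equation,
\[
\partial_t(\rho\phi)+\partial_x(\rho v\phi) = \phi\big(\partial_t\rho+\partial_x(\rho v)\big) + \rho\,D_t\phi = \rho\,D_t\phi .
\]
Taking $\phi=\omega$ gives the $z$-equation and taking $\phi=h$ gives the $w$-equation. The first equation is unchanged, because $v=\nicefrac{z}{\rho}-a(\nicefrac{w}{\rho})P(\rho)$ recovers the original velocity. The second form of the $z$-flux follows by expanding $zv = \nicefrac{z^2}{\rho}-z\,a(\nicefrac{w}{\rho})P(\rho)$.

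For the converse, start from a smooth solution of \eqref{PiuS3} with $\rho>0$. Define $h\coloneqq \nicefrac{w}{\rho}$, $\omega\coloneqq \nicefrac{z}{\rho}$ and $v\coloneqq v(\rho,z,w)$. The first equation of \eqref{PiuS3} is the continuity equation for $\rho$ with this $v$. Reading the identity above backwards, $\rho\,D_t h = 0$ and $\rho\,D_t\omega = 0$, and dividing by $\rho>0$ gives $D_t h = 0$ and $D_t\omega=0$. Since $v=\omega-a(h)P(\rho)$, we get
\[
D_t v = -a'(h)P(\rho)D_t h - a(h)P'(\rho)D_t\rho = a(h)P'(\rho)\rho\,\partial_x v,
\]
which is the second equation of \eqref{PiuS1}.

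No step is a genuine obstacle; the proof only needs care with two points. First, the equivalence requires $\rho>0$ so that $h$, $\omega$ and the map $(\rho,v,h)\leftrightarrow(\rho,z,w)$ are well defined and smooth. For $\gamma=0$ this is also needed for $\ln\rho$ to be defined. Second, the term $a'(h)P(\rho)D_th$ must be shown to vanish because of the $h$-equation. This is what distinguishes the argument from the classical ARZ case.
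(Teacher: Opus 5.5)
Your proposal is correct and follows essentially the same route as the paper. It derives $\partial_t\omega+v\partial_x\omega=0$ from the chain rule and the mass equation, and uses the product identity $\partial_t(\rho\zeta)+\partial_x(\rho\zeta v)=\zeta\big(\partial_t\rho+\partial_x(\rho v)\big)+\rho(\partial_t\zeta+v\partial_x\zeta)$ for $\zeta\in\{\omega,h\}$ in both directions. It then recovers the velocity equation from $v=\omega-a(h)P(\rho)$. You are slightly more careful than the paper: you state the $w$-equation explicitly in the forward direction, and you make explicit the assumption $\rho>0$ needed to divide by $\rho$ in the converse.
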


	\begin{proof}
		The second  equation in  model~\eqref{PiuS1}, i.e.~$\partial_t v+v\partial_xv
		=a(h)P'(\rho)\rho\partial_x v$, yields
		\begin{equation*}
			\partial_t\omega+v\partial_x\omega
			= \partial_t v+v\partial_x v
			+a'(h)P(\rho)\big(\partial_t h+v\partial_x h\big)
			-a(h)P'(\rho)\rho\partial_x v=0.
		\end{equation*}
		Combining this equation with the mass conservation law in~\eqref{PiuS1}, we obtain
		\begin{equation*}
			\partial_t(\rho\omega )+\partial_x(\rho\omega v)
			=
			\omega\big(\partial_t\rho+\partial_x(\rho v)\big)
			+\rho\big(\partial_t\omega+v\partial_x\omega\big)
			=0.
		\end{equation*}

		\noindent
		Conversely, suppose that the system~\eqref{PiuS3} holds. We have
		\begin{equation*}
			0
			=\partial_t(\rho\zeta)+\partial_x(\rho\zeta v)\\
			=\zeta\big(\partial_t\rho+\partial_x(\rho v)\big)
			+\rho\big(\partial_t\zeta+v\partial_x\zeta\big)
			\quad\text{for}\quad
			\zeta \in \{\omega, h\}. 
		\end{equation*}
		Hence, the equalities~$
		\partial_t\omega+v\partial_x\omega=0$ and $
		\partial_t h+v\partial_xh=0$ hold, which  yield the remaining equation 
		$
		\partial_t v+
		\big(v-a(h)P'(\rho)\rho\big)\partial_xv=0.
		$
		\hfill
	\end{proof}

	\subsection{Discussion of the Riemann problem.}
	Following~\cite[Sec.~5.1]{BRESSAN}, 
		Theorem~\ref{TheoremRP} 
		presents the entropy admissible weak solution
		to a Riemann problem with constant left~$\u_\ell$
		and right~$\u_r$
		state.  
		The PIU system~\eqref{PiuS1},~\eqref{PiuS3} forms a Temple class system where the fields $k=2,3$, which correspond to the double eigenvalue~$\lambda_v(v)=v$  and which travel faster than the first wave,
		form contacts. 
		Since the first field is genuinely nonlinear, it can  either produce a rarefaction or a shock wave, which is connected by an intermediate state~$
		(
		\bar{\rho},\bar{v},\bar{h}
		)^\T$ 
		with the contact.

		\medskip

		\begin{theorem}[Solution to the Riemann problem]\label{TheoremRP}
			Consider the conservative system
			\begin{equation*}
				\partial_t \u+\partial_x f(\u)=0
				\quad
				\text{with}
				\quad
				\u=(\rho,z,w)^{\T}
				\quad
				\text{for}
				\quad
				z=\rho\omega,
				\quad
				w=\rho h,
				\quad
				\omega=v+a(h)P(\rho)
			\end{equation*}
			with initial data~$\u(0,x)=\u_\ell$ for $x<0$ and $\u(0,x)=\u_r$ for $x>0$. 
Define the critical right-state velocity by
$$
v_{\textup{crit}}
\coloneqq
v_\ell
+
a(h_\ell)
\Big(P(\rho_\ell)-P(\rho_r)\Big).
$$
	The solution to the Riemann problem with 
initial velocity~$v_r > v_{\textup{crit}}$ 
			reads as 
			$$
			\u(t,x)
			\coloneqq
			\begin{cases}
				\u_\ell
				& \text{for \ \ } x<t \lambda_1(\u_\ell), \\
				\u_{\textup{rf}}(t,x)
				& \text{for \ \ } x\in \big( t\lambda_1(\u_\ell),t\lambda_1(\bar{\u})  \big) 
				\text{ \ \ and \ \  } \nicefrac{x}{t}   =\lambda_1\big(
			\u_{\textup{rf}}(t,x)
				\big), \\
				\bar{\u}
				& \text{for \ \ } x\in \big( t\lambda_1(\bar{\u}),tv_r  \big), \\
				\u_r
				& \text{for \ \ } x>t v_r.
			\end{cases}
			$$

			\noindent
			The self-similar solution~$\u_{\textup{rf}}
			= \big(
			\rho_{\textup{rf}}, 
			z_{\textup{rf}},
			w_{\textup{rf}}
			\big)^\T
			$ in the rarefaction span reads as
			$$
			\rho_{\textup{rf}}(x,t)=
			\begin{cases}
				\displaystyle
				\exp\!\left(
				\frac{\omega_\ell-\nicefrac{x}{t}}{a(h_\ell)}-1
				\right)
				&\text{if \ } \gamma=0,\\
				\left[
				\frac{\gamma\left(\omega_\ell-\nicefrac{x}{t}\right)}
				{(1+\gamma)a(h_\ell)}
				\right]^{\nicefrac{1}{\gamma}}
				&\text{if \ } \gamma>0,
			\end{cases}
			$$
			$z_{\textup{rf}}(t,x)
			=
			\rho_{\textup{rf}}(t,x) \omega_\ell
			$
			and~$w_{\textup{rf}}(t,x)
			=
			\rho_{\textup{rf}}(t,x) h_\ell
			$.  
			The intermediate state~$\widebar{\u}$ is given by~
			$$
			\widebar{\rho}
			=
			\begin{cases}
				\rho_\ell
				\exp\!\left(
				\dfrac{v_\ell-v_r}{a(h_\ell)}
				\right)
				& \text{if \ } \gamma=0,\\
				\left(
				\rho_\ell^\gamma+
				\dfrac{\gamma(v_\ell-v_r)}{a(h_\ell)}
				\right)^{\nicefrac{1}{\gamma}}
				& \text{if \ } \gamma>0
			\end{cases}
			\qquad
			\text{and}
			\qquad
			\widebar{v}=v_r, 
			\quad
			\widebar{h}=h_\ell.
			$$
			
			\noindent
			In the other case
			$v_r < v_{\textup{crit}}$, 
			the solution contains a Lax shock connecting the left state~$\u_\ell$  to the intermediate state~$\widebar{\u}$ where  the propagation speed reads as
$$
            s = \frac{v_\ell - \alpha v_r}{1-\alpha}
			\quad
			\text{with}
			\quad
            \alpha = 
\begin{cases}
\exp\!\left(\frac{v_\ell-v_r}{a(h_\ell)}\right)
&
\text{for }\ \
 \gamma=0,
\\
\left(
1+\frac{\gamma(v_\ell-v_r)}
{a(h_\ell)\rho_\ell^\gamma}
\right)^{\nicefrac{1}{\gamma}}
&
\text{for }\ \
\gamma>0.
\end{cases}
			$$
			
			\noindent
			In particular, the system is of Temple class. The Hugoniot loci coincide with the corresponding rarefaction curves. Both shock and rarefaction waves connect the left state to the same intermediate state. 
			It is 
			followed by a contact discontinuity connecting the intermediate state to the right state.
		\end{theorem}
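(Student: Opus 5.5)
The plan is to treat the system as a Temple-class system in the Riemann-invariant coordinates $(v,\omega,h)$, or equivalently $(\rho,\omega,h)$, which are available from Lemma~\ref{LemmaConservative}: $\omega$ and $h$ are transported with speed $v$. First I will check the wave families using the eigen-decomposition $R(\u)$ computed above. The first field has $r_1=(1,-a(h)P'(\rho)\rho\cdot\rho^{-1}\cdots)$; more precisely, along $r_1$ only $\rho$ and $v$ vary, and $\omega$ and $h$ stay fixed. Hence the $1$-integral curves through $\u_\ell$ are
\[
\{\omega=\omega_\ell,\ h=h_\ell\}.
\]
The $2,3$-fields satisfy $\nabla\lambda_v\cdot r_{2,3}=0$, since $\lambda_v=v$ and $v$ is constant along $r_2$ and $r_3$. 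They are therefore linearly degenerate, and $v$ is constant across them. For genuine nonlinearity of the first field I will compute $\partial_\rho\lambda_1$ along $\{\omega,h\text{ fixed}\}$. Writing $\lambda_1=\omega-a(h)(P(\rho)+\rho P'(\rho))$ with $\rho P'=\rho^\gamma$, this gives $-a(h)(1+\gamma)\rho^{\gamma-1}\ne0$.

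\textbf{Intermediate state.} It must lie on the $1$-curve through $\u_\ell$ and on the $2,3$-contact set through $\u_r$. The contact forces $\bar v=v_r$, and the $1$-curve forces $\bar h=h_\ell$ and $v_r+a(h_\ell)P(\bar\rho)=\omega_\ell$. Solving $P(\bar\rho)=P(\rho_\ell)+(v_\ell-v_r)/a(h_\ell)$ gives the stated formula for $\bar\rho$. The comparison $v_r>v_{\textup{crit}}$ is equivalent to $\bar\rho<\rho_r$ after substitution into this relation. The rarefaction-versus-shock criterion is governed by whether $\lambda_1$ increases from $\u_\ell$ to $\bar\u$, i.e.\ whether $\bar\rho<\rho_\ell$, i.e.\ $v_r>v_\ell$. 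I will need to reconcile this with the stated threshold $v_{\textup{crit}}$, which seems to mix $\rho_r$ in. This reconciliation is the step I expect to be delicate; I will first check whether the intended condition reduces to $\bar\rho$ versus $\rho_\ell$, and otherwise follow the statement's convention.

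\textbf{Rarefaction case.} The self-similar profile is obtained by solving $x/t=\lambda_1=\omega_\ell-a(h_\ell)(P(\rho)+\rho^\gamma)$ for $\rho$. For $\gamma=0$ this gives the exponential formula, and for $\gamma>0$ it gives $\rho^\gamma(1+\gamma)/\gamma=(\omega_\ell-x/t)/a(h_\ell)$. Then $z=\rho\omega_\ell$ and $w=\rho h_\ell$ follow.

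\textbf{Shock case.} I will verify the Temple property, namely that the Hugoniot locus coincides with the integral curve. Take any state $\bar\u$ with $\bar\omega=\omega_\ell$ and $\bar h=h_\ell$. The mass jump condition defines
\[
s=\frac{\bar\rho\bar v-\rho_\ell v_\ell}{\bar\rho-\rho_\ell}.
\]
Because $z=\omega\rho$ and $w=h\rho$ with common $\omega$ and $h$, the $z$- and $w$-Rankine--Hugoniot conditions are the mass condition multiplied by $\omega_\ell$ and $h_\ell$, so they hold automatically. Setting $\alpha=\bar\rho/\rho_\ell$ and dividing by $\rho_\ell$ gives $s=(v_\ell-\alpha v_r)/(1-\alpha)$, with $\alpha$ as stated from the formula for $\bar\rho$. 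The Lax inequalities $\lambda_1(\bar\u)<s<\lambda_1(\u_\ell)$ follow from the monotonicity of $\lambda_1$ in $\rho$ along the curve together with $\bar\rho>\rho_\ell$. The contact between $\bar\u$ and $\u_r$ trivially satisfies the Rankine--Hugoniot conditions with speed $v_r$, since every flux is (conserved quantity)$\times v$ and $v$ is continuous.

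The remaining step is to check wave ordering, $\lambda_1(\bar\u)\le v_r$. This holds because $\lambda_1(\bar\u)=v_r-a(h_\ell)\rho^\gamma$. With the ordering in place, assembling the pieces gives the solution formula in the statement.
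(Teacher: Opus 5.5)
Apart from one step, your plan is the paper's proof. The $1$-Riemann invariants $\omega,h$ and the contact invariant $v$ give $\bar v=v_r$, $\bar h=h_\ell$, $\bar\omega=\omega_\ell$ and the formula for $\bar\rho$. The fan comes from solving $x/t=\lambda_1$ on $\{\omega=\omega_\ell,\,h=h_\ell\}$. Rankine--Hugoniot gives $\bar\rho=\alpha\rho_\ell$ and the stated $s$, and Lax follows from monotonicity of $\lambda_1$ in $\rho$.

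\textbf{The gap: the classification step cannot be closed.} The step you deferred is a genuine gap, and your suspicion is right. With $a(h_\ell)>0$, a fan is admissible iff $\lambda_1(\u_\ell)<\lambda_1(\bar{\u})$, i.e.\ $\bar\rho<\rho_\ell$, i.e.\ $v_r>v_\ell$. A Lax shock is admissible iff $v_r<v_\ell$. By contrast, $v_r>v_{\textup{crit}}$ is equivalent to $\bar\rho<\rho_r$, as you computed. The two thresholds coincide only when $\rho_r=\rho_\ell$.

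A counterexample in one direction: take $\gamma=1$, $a(h_\ell)=1$, $\rho_\ell=0.1$, $v_\ell=1$, $\rho_r=0.5$, $v_r=0.9$. Then $v_{\textup{crit}}=0.6<v_r$, so the statement prescribes a rarefaction. But $\bar\rho=0.2>\rho_\ell$ and $\lambda_1(\u_\ell)=0.9>0.7=\lambda_1(\bar{\u})$, so the fan interval is empty. The entropy solution is a Lax shock with $\alpha=2$, $s=0.8$.

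The reverse failure occurs for the paper's own Figure~1(i) data ($\gamma=2$, $a(h)=1+h$, $h_\ell=0.8^{3/2}$, $\rho_\ell=0.8$, $\rho_r=0.1$, $v_\ell=0.2$, $v_r=0.5$). There $v_{\textup{crit}}\approx0.74>v_r$ predicts a shock, yet $\bar\rho\approx0.54<\rho_\ell$, and the figure correctly shows a rarefaction.

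The paper's proof contains exactly this unjustified jump. After showing that a shock means $\bar\rho>\rho_\ell$, it asserts that ``the condition for a $1$-rarefaction wave can be written as $v_r-v_\ell>a(h_\ell)\big(P(\rho_\ell)-P(\rho_r)\big)$''. This amounts to replacing $\bar\rho$ by $\rho_r$ in $v_r-v_\ell=a(h_\ell)\big(P(\rho_\ell)-P(\bar\rho)\big)$. So ``following the statement's convention'' would mean proving a false equivalence. Commit instead to the threshold $v_\ell$, and say explicitly that $v_{\textup{crit}}$ must be replaced by it. The remaining formulas ($\bar\rho$, the fan, $\alpha$, $s$, the Temple structure) are correct.

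\textbf{Smaller points to tighten.}
\begin{itemize}
\item In the shock case the ordering you need is $s<v_r$, not just $\lambda_1(\bar{\u})\le v_r$. It follows from $(s-v_r)(\rho_\ell-\bar\rho)=\rho_\ell(v_\ell-v_r)$.
\item Your Rankine--Hugoniot check only shows that the integral curve lies in the Hugoniot locus. For ``coincide'', add the converse as the paper does: $(s-v_\ell)\u_\ell=(s-\bar v)\bar{\u}$ forces $\bar{\u}$ to be proportional to $\u_\ell$ unless $s=\bar v=v_\ell$, which is the contact.
\item For Lax, say explicitly that on the curve the mass flux is $g(\rho)=\rho\big(\omega_\ell-a(h_\ell)P(\rho)\big)$ with $g'=\lambda_1$. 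Then $s$ is a secant slope of $g$, and strict monotonicity of $g'$ gives both inequalities.
\item For $\gamma>0$, $\bar\rho$ exists only if $v_r<\omega_\ell$. Otherwise a vacuum state appears, which the statement does not address.
\end{itemize}
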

		
		\medskip
		
		\begin{proof}
			Due to the Riemann invariant of the first and third field, i.e.~${
				\R_1(\u)=\omega}$, 
			$\R_3(\u)=v$, 
			the intermediate state satisfies~$\widebar{\omega} = \omega_\ell$ and $\widebar{h}=h_\ell$. 
			Since it is connected to the right state by a contact, we have~$\widebar{v}=v_r$ such that the density is determined by~$P(\widebar\rho) = (\omega_\ell-v_r)a(h_\ell)^{-1}$.  
			The velocity remains constant over a contact wave and hence the velocity of the intermediate state satisfies~$\bar{v}=v_r$.	
			The rarefaction wave is parameterized by~$\xi=\nicefrac{x}{t}$. The self-similar solution is given by
			\begin{align*}
				&	\xi=\lambda_1\big(\rho(\xi)\big)
				= 
				\omega_\ell - a(h_\ell) \Big[P'\big(\rho(\xi)\big)\rho(\xi)+P\big(\rho(\xi)\big)\Big]\\
				&=
				\begin{cases}
					\omega_\ell - a(h_\ell) \Big[1+\ln\big(\rho(\xi)\big) \Big] 
					& \text{if } \gamma =0, \\
					\omega_\ell - a(h_\ell) \frac{1+\gamma}{\gamma} \rho(\xi)^\gamma 
					& \text{if } \gamma >0 \\
				\end{cases} \\
				& \Leftrightarrow \quad
				P\big(\rho(\xi)\big) = \frac{\omega_\ell -\xi}{a(h_\ell)}-1
				\text{ \ for \ }  \gamma =0
				\quad\text{ and }\quad 
				P\big(\rho(\xi)\big) = \frac{\omega_\ell - \xi}{(1+\gamma)a(h_\ell)}
				\text{ \ for \ } \gamma >0.
			\end{align*}
			Then, the velocity in the rarefaction wave is  obtained from~$v(\xi) = \omega_\ell - a(h_\ell)P\big(\rho(\xi)\big)$.

			\medskip
			\noindent	
When the first-family wave is replaced by the
			corresponding Lax shock, whose speed is determined by the
			Rankine-Hugoniot condition 
			\begin{equation}\label{ProofHL}
				s(\u_\ell - \widebar{\u}) = f(\u_\ell) - f(\widebar{\u})
				= v_\ell \u_\ell- v_r \widebar{\u} 
				\quad\Leftrightarrow\quad
				\widebar{\u}= \alpha\u_\ell  \quad
				\text{for}\quad
				\alpha = \frac{v_\ell-s}{v_r-s},
			\end{equation}
			where we use  $\widebar{v}=v_r$, since the second and third waves are  contact
			discontinuities propagating at speed $v_r$.  In particular, it holds
\[
\alpha
=
\frac{1}{\rho_\ell}
P^{-1}\!\left(
P(\rho_\ell)
+
\frac{v_\ell-v_r}{a(h_\ell)}
\right).
\]
	The linear relation~\eqref{ProofHL}, which results from the Rankine-Hugoniot condition, states 
			$$
			\widebar{\omega}
			=
			\frac{z}{\rho}
			=
			\frac{\alpha z_\ell}{\alpha \rho_\ell}
			=
			\omega_\ell.
			$$	
			Hence, the Hugoniot locus and the corresponding integral curve coincide. 
			Along the first integral curve, the invariants $\omega$ and $h$ remain constant. Due to~$
			v=\omega_\ell-a(h_\ell)P(\rho)
			$, the characteristic speed
			$\lambda_1(\rho)
			=\omega_\ell-a(h_\ell)\bigl(P(\rho)+\rho P'(\rho)\big)
			$ 
			is strictly decreasing in the density~$\rho$. Therefore, the Lax entropy condition
			$
			\lambda_1(\widebar{\rho}_s)<s<\lambda_1(\rho_\ell)
			$, which yields a shock, 
			is equivalent to
			$
			{\widebar{\rho}_s>\rho_\ell}
			$. Here, the state~$\widebar{\rho}_s$ denotes the intermediate density that is obtained when the left state is connected by a shock wave. 
The condition for a $1$-rarefaction wave can be written as
$$
v_r-v_\ell
>
a(h_\ell)\Big(P(\rho_\ell)-P(\rho_r)\Big),
$$
or, equivalently,
$
v_r>v_{\textup{crit}}. 
$ 
Hence, the  condition for the $1$-rarefaction wave is proved.

			\hfill
		\end{proof}

\medskip

\begin{remark}[Comparison with the Aw-Rascle-Zhang and LWR model]\label{RemarkARZandLWR}
	The wave classification is closely related to the classical Aw-Rascle-Zhang 
	model~\eqref{ARZ}. There, the first Riemann invariant is 
	$
	\wAR =v+p(\rho). 
	$
    Since the Riemann invariant $\wAR$ is constant along a $1$-wave curve, it holds~$
	v_r-v_\ell=p(\rho_\ell)-p(\rho_r).
	$
	Consequently, if in the Aw--Rascle-Zhang model the condition
\begin{equation}\label{CondARZ}
	2p'(\rho)+\rho p''(\rho)>0
\end{equation}
	is satisfied, the eigenvalue  $
\lambda_1^{\textup{ARZ}}(\rho,v) =v-\rho p'(\rho)	
=
\omega^{\textup{ARZ}}- p(\rho) - \rho p'(\rho)	$
	is strictly decreasing with respect to $\rho$ along the
	$1$-wave curve. Hence, we have the implications
\begin{equation}\label{DensityOrdering}
		\rho_\ell>\rho_r
	\quad\Leftrightarrow\quad
	\text{$1$-rarefaction wave}
\qquad\quad\text{and}\quad\qquad
	\rho_\ell<\rho_r
	\quad\Leftrightarrow\quad
	\text{$1$-shock}.
\end{equation}
The PIU model is related to this property directly, since along a $1$-wave curve with $h=h_\ell$ 
	constant, the corresponding traffic pressure function is
	\[
	p(\rho)=a(h_\ell)P(\rho).
	\]
	Since we have the property $a(h_\ell)>0$, the corresponding genuine nonlinearity condition
	reduces to the condition
	$
	2P'(\rho)+\rho P''(\rho)>0
	$, which is similar to condition~\eqref{CondARZ} of the Aw-Rascle-Zhang model. In fact, this condition is satisfied for both choices~$
	P(\rho)=\ln(\rho)$ 
	and  
	$
	P(\rho)=\nicefrac{\rho^\gamma}{\gamma}$ for $\gamma>0$.
	Therefore, for states lying on the same $1$-Riemann invariant curve, the
	wave type can equivalently be characterized by the same implications, namely the case~$\rho_\ell>\rho_r$ yields a $1$-rarefaction, while the case $\rho_\ell<\rho_r$ yields a $1$-shock. 

	For general Riemann data with independently prescribed velocities $v_\ell$ and
	$v_r$, however, the density ordering alone is not sufficient.  In that
	case the classification must be expressed in terms of the critical
	velocity in Theorem~\ref{TheoremRP}.

This issue motivates to introduce an equilibrium relation that is defined in terms of a closure consistent
with a prescribed LWR fundamental diagram. Let $\HAR(\rho)$ denote an
equilibrium closure function for the driver-dependent variable $h$, i.e.~$
h=\HAR(\rho). 
$
Fixing an equilibrium value $\omega_0$ of the Riemann invariant 
$
\omega=v+a(h)P(\rho),
$ 
we require the corresponding equilibrium velocity to coincide with a
prescribed LWR velocity function~$V(\rho)$, namely, 
\begin{equation}
\begin{aligned}
\label{Hchoice}
	&V(\rho)
=
\omega_0-a\big(\HAR(\rho)\big)P(\rho)
\quad
\Leftrightarrow
\quad
	\HAR(\rho)
	=
	a^{-1}\!\left(
	\frac{\omega_0-V(\rho)}{P(\rho)}
	\right) \\
&\text{with equilibrium}\quad
Q(\rho)
\coloneqq
a\big(\HAR(\rho)\big)P(\rho)
=\omega_0-V(\rho),
\end{aligned}
\end{equation}
provided that the function $a()$ is locally invertible. 
With this choice, the equilibrium PIU velocity is exactly the prescribed LWR velocity~$V(\rho)$. 
This closure  establishes a direct correspondence between the
characteristic speeds of the PIU equilibrium system and the LWR model.
Indeed, the choice~\eqref{Hchoice} 
yields  the relations
$
Q'(\rho)=-V'(\rho)
$ 
and 
$
\lambda^{\textup{LWR}}(\rho)
=
V(\rho)-\rho Q'(\rho)
=
V(\rho)+\rho V'(\rho),
$
which is precisely the characteristic speed of the LWR model
$
\partial_t\rho+\partial_x\big(\rho V(\rho)\big)=0.
$ 
Consequently, in this equilibrium, the wave type can be
characterized even  for general Riemann data by the desired density ordering~\eqref{DensityOrdering}. 
\end{remark}

		\subsection{Entropy-entropy flux pairs.}
		Theorem~\ref{LemmaEntropy} states entropy-entropy flux pairs for the Piu model~\eqref{PiuS3}. 
		
		\medskip
		\begin{theorem}[Semi-convex entropy-entropy flux pairs]\label{LemmaEntropy}	
			Let $F\in C^2(\mathbb{R}^2)$ be any strictly convex function. 
			Then, the functions 
			$$
			\entropy(\rho,z,w)
			=
			\rho F\left(\frac{z}{\rho},\frac{w}{\rho}\right)
			\quad\text{and}\quad
			\entropyflux(\rho,z,w)
			=v(\rho,z,w)
			\entropy(\rho,z,w)
			$$
			form a semi-convex entropy-entropy flux pair that satisfies the entropy equation exactly, i.e.~$
			\partial_t \entropy(\rho,z,w) + \partial_x \entropyflux(\rho,z,w)=0. 
			$ In particular, the choices
			$$
			\begin{aligned}
				&F(\omega,h)=\widetilde{F}(\omega), \quad  F(\omega,h)=\widetilde{F}(h),\quad 
				F(\omega,h)=\frac{1}{2}\omega^2+E(h), \\
				&\text{i.e.}\quad
				\entropy(\rho,z,w) = \frac{z^2}{2\rho} + \rho E\left( \frac{w}{\rho} \right),
			\end{aligned}
			$$
			yield semi-convex entropies for all strictly convex functions~$\widetilde{F}, E \in C^2(\mathbb{R})$.

		\end{theorem}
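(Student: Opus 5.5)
The plan has two parts: verify the entropy equation directly from the transport structure of Lemma~\ref{LemmaConservative}, then analyse the Hessian of $\entropy$ as a perspective function.

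\emph{Entropy equation.} For smooth solutions, Lemma~\ref{LemmaConservative} gives the transport equations $\partial_t\omega+v\partial_x\omega=0$ and $\partial_t h+v\partial_x h=0$, with $\omega=z/\rho$ and $h=w/\rho$. By the chain rule the function $F(\omega,h)$ is then transported as well:
\begin{equation*}
\partial_t F + v\partial_x F = F_\omega\big(\partial_t\omega+v\partial_x\omega\big)+F_h\big(\partial_t h+v\partial_x h\big)=0.
\end{equation*}
The same product-rule computation used in the proof of Lemma~\ref{LemmaConservative} then yields
\begin{equation*}
\partial_t(\rho F)+\partial_x(\rho F v)=F\big(\partial_t\rho+\partial_x(\rho v)\big)+\rho\big(\partial_t F+v\partial_x F\big)=0.
\end{equation*}
This is exactly $\partial_t\entropy+\partial_x\entropyflux=0$ with $\entropyflux=v\entropy$. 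Equivalently, one can check the compatibility condition $\nabla\entropyflux=\nabla\entropy\,Df$, but the transport argument avoids computing $Df$ explicitly.

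\emph{Convexity.} The function $\entropy(\rho,z,w)=\rho F(z/\rho,w/\rho)$ is the perspective of $F$. The plan is to compute its Hessian directly. Set $y=(\omega,h)^\T$ and use $\partial_\rho y=-y/\rho$ and $\partial_{(z,w)}y=\rho^{-1}I$. Then
\begin{equation*}
\nabla\entropy=\big(F-y^\T\nabla F,\ \nabla F^\T\big),
\end{equation*}
and differentiating once more gives
\begin{equation*}
D^2\entropy=\frac{1}{\rho}\begin{pmatrix} y^\T \\ -I\end{pmatrix}\nabla^2F\begin{pmatrix} y & -I\end{pmatrix}.
\end{equation*}
Hence, for $\rho>0$ and $\xi=(\xi_0,\xi_1,\xi_2)$,
\begin{equation*}
\xi^\T D^2\entropy\,\xi=\rho^{-1}\,\zeta^\T\nabla^2F\,\zeta\ \ge 0,\qquad \zeta=\xi_0 y-(\xi_1,\xi_2)^\T.
\end{equation*}
The form vanishes exactly on the ray $\xi\parallel(1,\omega,h)=\u/\rho$. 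Consequently $\entropy$ is convex but never strictly convex, since it is homogeneous of degree one. This degeneracy is what ``semi-convex'' should mean here, and the main point of care is to state this interpretation precisely: positive semidefinite Hessian whose kernel is exactly the radial direction.

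\emph{Special cases.} For $F=\widetilde F(\omega)$ or $F=\widetilde F(h)$, the function $F$ is only convex, not strictly convex, on $\mathbb{R}^2$. I would not rely on the strict-convexity hypothesis here. Instead, the same Hessian formula with the positive semidefinite matrix $\nabla^2F$ still gives $D^2\entropy\ge0$, now with a two-dimensional kernel, and the entropy equation holds unchanged because the transport argument never used convexity. For $F=\tfrac12\omega^2+E(h)$, the function $F$ is strictly convex whenever $E$ is. Direct substitution gives $\entropy=\rho\big(\tfrac12(z/\rho)^2+E(w/\rho)\big)=z^2/(2\rho)+\rho E(w/\rho)$, which is the stated formula.

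The main obstacle is the Hessian bookkeeping in the convexity step; the factorization through $(y,-I)$ keeps it short.
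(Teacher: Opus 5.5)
Your proposal is correct and follows the paper's proof: you use the same transport-plus-mass-conservation argument for $\partial_t\entropy+\partial_x\entropyflux=0$, and the same reduction of $\xi^\T\nabla^2\entropy\,\xi$ to $\rho^{-1}\zeta^\T\nabla^2F\,\zeta$ with $\zeta=\pm(\xi_z-\omega\xi_\rho,\,\xi_w-h\xi_\rho)^\T$, which you derive explicitly through the $(y,-I)$ factorization that the paper only states in its final form. One small caveat, which does not affect the semi-convexity claim: the kernel is exactly the ray $\xi\parallel(1,\omega,h)$ only where $\nabla^2F$ is positive definite, and strict convexity of a $C^2$ function does not guarantee this (e.g.\ $F(\omega,h)=\omega^4+h^4$ at the origin).
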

		
		\medskip
		\begin{proof}
			For any convex function $F\in C^2(\mathbb{R}^2)$ we have
			$$
			\partial_t F(\omega,h)
			+v\partial_x F(\omega,h)
			=
			F_\omega(\omega,h)
			\left(\partial_t\omega+v\partial_x\omega\right)\\
			+
			F_h(\omega,h)
			\left(\partial_t h+v\partial_x h\right)
			=0.
			$$
			This relation and conservation of mass imply the entropy equality
			$$
			\partial_t\big(\rho F(\omega,h)\big)
			+\partial_x\big(\rho vF(\omega,h)\big)
			=
			F(\omega,h)
			\big(
			\partial_t\rho+\partial_x(\rho v)
			\big)\\
			+
			\rho
			\big(
			\partial_tF(\omega,h)
			+v\partial_xF(\omega,h)
			\big)
			=0.
			$$
			The entropies are semi-convex, since it holds
			$$
			\xi^\T \nabla^2\eta\,\xi
			=
			\frac{1}{\rho}
			\begin{pmatrix}
				\xi_z-\omega\xi_\rho\\
				\xi_w-h\xi_\rho
			\end{pmatrix}^{\T}
			\nabla^2F(\omega,h)
			\begin{pmatrix}
				\xi_z-\omega\xi_\rho\\
				\xi_w-h\xi_\rho
			\end{pmatrix} \geq 0
			\quad\text{for all}\quad
			\xi = \begin{pmatrix}
				\xi_\rho \\ \xi_z \\ \xi_w
			\end{pmatrix} \in\mathbb{R}^3.
			$$
			\hfill
		\end{proof}
		
		\medskip

		The origin of these entropy functions can be traced back to the
		microscopic formulation, where the quantities
		$
		\dot{\omega}_i=0 
		$ and 
		$
		\dot{h}_i=0
		$ are preserved, when they are transported unchanged along the
		individual vehicle trajectories. Consequently, for any smooth function
		$F$, the quantities $F(\omega_i)$ and $F(h_i)$ are likewise preserved
		along the microscopic trajectories.
		Since $\rho$ represents the number of vehicles per unit length, the terms
		$\rho F(\omega)$ and $\rho F(h)$ can be interpreted as densities of the
		corresponding transported quantities. 
		The fact that these entropies are only semi-convex in the full
		conservative state is  a  consequence of the
		structural form that is inherited from the microscopic invariants.

		The kinetic entropy ansatz~$F(\omega,h)=\nicefrac{\omega^2}{2}+E(h)$  can be motivated directly from the
		microscopic description and its physical interpretation. At the
		microscopic level, each vehicle is characterized by its position,
		its generalized velocity~$\omega=v+a(h)P(\rho)$ and the driver-dependent property~$h$. 
		Since $\omega$ is the dynamically relevant velocity variable, we associate
		with it the  quadratic kinetic energy. The specific kinetic energy
		per vehicle is therefore~$
		e_{\textup{kin}}(\omega)=\nicefrac{\omega^2}{2}
		$. 
		Moreover, the additional microscopic variable $h$ represents an internal
		state of the vehicle or driver and can contribute an additional
		specific energy. We denote this contribution by $E(h)$ and obtain the 
		specific energy associated with one vehicle by
		$
		F(\omega,h)
		=
		e_{\textup{kin}}(\omega)+E(h)$.  
		To pass from the microscopic description to the macroscopic one, the
		specific energy has to be multiplied by the vehicle density~$\rho$.
		More precisely, 
		the product $\rho e$ represents the energy per unit length.

		\section{Relaxation and stability analysis.}
		\label{SectionRelaxation} 
Motivated by the LWR-consistent equilibrium relation $h=\HAR(\rho)$ established in Remark~\ref{RemarkARZandLWR}, we introduce a relaxation mechanism that drives the system towards an equilibrium and analyze the stability of the resulting macroscopic relaxation system. 
We investigate an equilibrium that can be described by a second-order model, which is consistent with the first-order LWR model with the equilibrium velocity presented in equation~\eqref{Hchoice}. More precisely, we consider from now on the equilibria
$$
\veq(\rho,z)
=
\frac{z}{\rho}-Q(\rho)
=
\frac{z}{\rho}-a\big(\HAR(\rho)\big)P(\rho)
=
v\big(\rho,z,\HAR(\rho)\big),
$$
which are based on the traffic pressure~$P(\rho)=\nicefrac{\rho^\gamma}{\gamma}$ for $\gamma>0$. 
The underlying mathematical tool to study this second-order equilibrium is a Chapman-Enskog expansion, which is introduced in the following Lemma.

\medskip
     
		\begin{lemma}[First-order Chapman-Enskog expansion]\label{Oldpaper}
			If the PIU closure $a\in C^2$ is twice continuously differentiable, equations
			$$	
			\partial_t w +  \partial_x \Big(wv(\rho,z,w)
			\Big)
			=
			-
			\frac{w-\WAR(\rho)}{\varepsilon}	
			\quad\text{and}\quad
			\partial_t h +  v \partial_x h
			=
			-
			\frac{h-\HAR(\rho)}{\varepsilon}.
			$$
			imply for~$\HAR'\in C^1(\mathbb{R}^+)$ 
			and~$\WAR(\rho) = \rho\HAR(\rho)$ 
			the second-order representations
			\begin{alignat*}{8}
				& &&h &&=
				&&\HAR(\rho) &&+ \varepsilon 
				&&\HAR'(\rho)\rho
				\partial_x v\big(\rho,z,\WAR(\rho)\big)
				&&+
				\mathcal{O}\big(\varepsilon^2\big), \\
				&	a(&&h)&&=
				a\big(&&\HAR(\rho)\big)
				&&+
				\varepsilon 
				a'\big(\HAR(\rho)\big)
				&&	\HAR'(\rho)\rho
				\partial_x v\big(\rho,z,\WAR(\rho)\big)		&&+		\mathcal{O}\big(\varepsilon^2\big). 
			\end{alignat*}

		\end{lemma}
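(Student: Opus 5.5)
The plan is a standard Chapman--Enskog ansatz in the nonconservative variable $h$, and then a transfer of the result to $a(h)$ by Taylor expansion. First, using the mass equation $\partial_t\rho+\partial_x(\rho v)=0$ together with the argument of Lemma~\ref{LemmaConservative}, I check that for smooth solutions the two relaxation equations in the statement are equivalent. Indeed,
\begin{equation*}
\partial_t(\rho h)+\partial_x(\rho h v)=h\big(\partial_t\rho+\partial_x(\rho v)\big)+\rho\big(\partial_t h+v\partial_x h\big),
\end{equation*}
and the right-hand side satisfies $-(w-\WAR(\rho))/\varepsilon=-\rho\,(h-\HAR(\rho))/\varepsilon$ because $\WAR=\rho\HAR$. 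It therefore suffices to work with the $h$-equation.

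Next, I make the ansatz $h=\HAR(\rho)+\varepsilon h_1+\mathcal{O}(\varepsilon^2)$ and insert it into $h-\HAR(\rho)=-\varepsilon(\partial_t h+v\partial_x h)$. At order $\varepsilon$ this gives $h_1=-(\partial_t+v\partial_x)\HAR(\rho)+\mathcal{O}(\varepsilon)$. The chain rule, which requires $\HAR'\in C^1$ so that the remainder is controlled, then yields
\begin{equation*}
(\partial_t+v\partial_x)\HAR(\rho)=\HAR'(\rho)\big(\partial_t\rho+v\partial_x\rho\big)=-\HAR'(\rho)\,\rho\,\partial_x v,
\end{equation*}
again by mass conservation. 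Hence $h_1=\HAR'(\rho)\rho\,\partial_x v$. Here $v=v(\rho,z,w)$ with the true $w$. Since $w=\WAR(\rho)+\mathcal{O}(\varepsilon)$, I replace $v(\rho,z,w)$ by $v(\rho,z,\WAR(\rho))$ inside $h_1$. This costs only $\mathcal{O}(\varepsilon)$ in $h_1$, hence $\mathcal{O}(\varepsilon^2)$ in $h$, provided $v$ is $C^1$ in $w$ (true since $a\in C^2$) and the $x$-derivative of the correction is bounded. That establishes the first representation.

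For the second representation, I Taylor expand $a(h)=a(\HAR)+a'(\HAR)(h-\HAR)+\mathcal{O}((h-\HAR)^2)$. The remainder is controlled by $a\in C^2$, and since $h-\HAR=\mathcal{O}(\varepsilon)$ it is $\mathcal{O}(\varepsilon^2)$. Substituting the first representation gives the stated formula.

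The main obstacle is justifying the formal substitutions, namely that time and space derivatives of the $\mathcal{O}(\varepsilon)$ remainders stay $\mathcal{O}(\varepsilon)$. This is the usual Chapman--Enskog hypothesis of smooth solutions with bounded derivatives near the equilibrium manifold, and I would state it as an assumption rather than prove it.
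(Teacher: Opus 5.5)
Your proposal is correct and follows essentially the paper's argument: a first-order Chapman--Enskog ansatz, the replacement of $v(\rho,z,w)$ by $v(\rho,z,\WAR(\rho))$ at a cost of $\mathcal{O}(\varepsilon)$, and a $C^2$ Taylor expansion of $a$ around $\HAR(\rho)$. The only difference is that you expand the material-derivative $h$-equation directly, after checking via mass conservation that it is equivalent to the $w$-equation, whereas the paper expands $w=\WAR(\rho)+\varepsilon w^{(1)}$ in the conservative equation and then divides by $\rho$ using $\WAR'(\rho)-\WAR(\rho)/\rho=\rho\HAR'(\rho)$; this is a minor variant that simply spares you that identity.
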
	
		
		\begin{proof}
			We apply the Chapman-Enskog expansion~$w = \WAR(\rho) + \varepsilon w^{(1)} + \mathcal{O}\big(\varepsilon^2\big)
			$ to the third equation of system~\eqref{SystemRelaxAwRascle} which yields also an expansion in terms of hesitation
			\begin{equation}\label{hChapman}
				h=\frac{w}{\rho}
				=
				\frac{\WAR(\rho)}{\rho}
				+
				\mathcal{O}(\varepsilon)
				=
				\HAR(\rho)
				+
				\mathcal{O}(\varepsilon)
				\ \
				\text{with}
				\ \
				\HAR(\rho)
				=
				\frac{\WAR(\rho)}{\rho}.
			\end{equation}
			Taylor's theorem states for some~$\xi\in\big(h,\HAR(\rho) \big)$ or~$\xi\in\big(\HAR(\rho), h\big)$, respectively, which implies the bound~$ (h-\xi)=\mathcal{O}(\varepsilon)
			$, 
			the second-order representation
			\begin{equation}\label{Taylor}
				\begin{aligned}
					a(h)
					&=
					a\big(\HAR(\rho)\big)
					+
					\big(
					h-\HAR(\rho)
					\big)
					\Big[
					a'\big(
					\HAR(\rho)
					\big)
					+
					a''(\xi) (h-\xi)
					\Big] \\
					&=
					a\big(\HAR(\rho)\big)
					+
					\big(
					h-\HAR(\rho)
					\big)
					a'\big(\HAR(\rho)\big)
					+
					\mathcal{O}\big(\varepsilon^2\big).
				\end{aligned}
			\end{equation}
			The representation~\eqref{Taylor} and the expansion of~\eqref{hChapman} imply for the velocity the first-order representation
			$$
			v(\rho,z,w)
			=
			\frac{z}{\rho}-a(h) P(\rho)	
			=
			\frac{z}{\rho}
			-
			a\big(
			\HAR(\rho)
			\big)P(\rho)
			+
			\mathcal{O}(\varepsilon)
			=
			v\big(
			\rho,z,\WAR(\rho)
			\big)
			+
			\mathcal{O}(\varepsilon).
			$$

			\noindent
			Hence, the third equation satisfies
			\begin{alignat*}{8}
				&-w^{(1)}
				&&=\partial_t \WAR(\rho) +  \partial_x \Big(\WAR(\rho) v\big(\rho,z,\WAR(\rho)\big)
				\Big)
				&&+\mathcal{O}(\varepsilon) \\
				& &&=
				-\WAR'(\rho)\partial_x\Big(\rho v\big(\rho,z,\WAR(\rho)\big) \Big) \\
				&	&&\quad +
				\WAR'(\rho)  v\big(\rho,z,\WAR(\rho)\big)\partial_x \rho + \WAR(\rho) \partial_x v\big(\rho,z,\WAR(\rho)\big)
				&&+
				\mathcal{O}(\varepsilon) \\
				& &&=-\WAR'(\rho)\rho \partial_x v\big(\rho,z,\WAR(\rho)\big)
				\; +
				\WAR(\rho) \partial_x v\big(\rho,z,\WAR(\rho)\big)
				&&+
				\mathcal{O}(\varepsilon)
			\end{alignat*}
			which leads to  the second-order expansions
			\begin{alignat*}{8}
				&	w&&=
				\	\WAR(\rho)
				&&
				&&	+
				\varepsilon
				\big(
				\WAR'(\rho)\rho-\WAR(\rho)
				\big)
				&&\partial_x v\big(\rho,z,\WAR(\rho)\big)
				&&+
				\mathcal{O}\big(\varepsilon^2\big), \\
				h=	& \frac{w}{\rho} &&    		=
				\;	\frac{\WAR(\rho)}{\rho}
				&&
				&&	+
				\varepsilon\bigg(
				\WAR'(\rho)-\frac{\WAR(\rho)}{\rho}
				\bigg)
				&&\partial_x v\big(\rho,z,\WAR(\rho)\big)
				&&+\mathcal{O}\big(\varepsilon^2\big)\\
				& &&= \
				\HAR(\rho) && &&+ \varepsilon \rho\quad
				\HAR'(\rho)
				&&\partial_x v\big(\rho,z,\WAR(\rho)\big)
				&&+
				\mathcal{O}\big(\varepsilon^2\big).
			\end{alignat*}
			Then, the second-order Taylor  representation~\eqref{Taylor} reads as
			\begin{equation*}
				a(h)=
				a\big(\HAR(\rho)\big)
				+
				\varepsilon \rho
				a'\big(\HAR(\rho)\big)
				\HAR'(\rho)
				\partial_x v\big(\rho,z,\WAR(\rho)\big)		+		\mathcal{O}\big(\varepsilon^2\big).
			\end{equation*}
			\hfill
		\end{proof}

		\subsection{Relaxation in terms of the conservative form.}
This subsection introduces the relaxation approach used to analyze the stability of the proposed traffic model.

        \medskip
		\begin{theorem}[Diffusive relaxation limit]\label{TheoremAwRascle1}	
			Let an equilibrium~$
			\WAR(\rho)
			\coloneqq
			\rho
			\HAR(\rho)
			$
			be given and assume the function~$a()$ is twice continuously differentiable in a small neighbourhood around~$\HAR(\rho)$. 
			Then, the first-order correction to the local equilibrium approximation of the relaxed model
			\begin{equation}\label{SystemRelaxAwRascle}
				\begin{cases}
					\begin{aligned}
						\partial_t \rho + \partial_x\Big(\rho v(\rho,z,w) \Big)&=0,\\
						\partial_t z+\partial_x 
						\Big(
						zv(\rho,z,w)
						\Big)
						&=0, \\
						\partial_t w +  \partial_x \Big(wv(\rho,z,w)
						\Big)
						&=
						-
						\frac{w-\WAR(\rho)}{\varepsilon}
					\end{aligned}
				\end{cases}
			\end{equation}	
			for 
			$\displaystyle
			v(\rho,z,w)
			=
			\frac{z}{\rho}-a\Big(\frac{w}{\rho}\Big) P(\rho)
			$ 	reads as
			\begin{equation*}
				\begin{cases}
					\begin{aligned}
						\partial_t \rho + \partial_x \Big(
						\rho v\big(\rho,z,\WAR(\rho)\big)
						\Big)
						&= 
						\varepsilon
						\partial_x
						\Big(\rho
						\mu(\rho)
						\partial_x v\big(\rho,z,\WAR(\rho)\big)	\Big)
						+
						\mathcal{O}\big(
						\varepsilon^2
						\big), \\
						\partial_t z + 
						\partial_x
						\Big(
						zv\big(\rho,z,\WAR(\rho)\big)			\Big) 
						&= 
						\varepsilon
						\partial_x \Big(z
						\mu(\rho)
						\partial_x v\big(\rho,z,\WAR(\rho)\big)
						\Big)+
						\mathcal{O}\big(\varepsilon^2\big)
					\end{aligned}
				\end{cases}
			\end{equation*}
			with the velocity-gradient diffusion coefficient~$\mu(\rho)\coloneqq
			a'\big(\HAR(\rho)\big)
			\HAR'(\rho) P(\rho) \rho
			$. 
			It is negative, i.e.~$\mu(\rho)<0$, in the case~$
			\partial_\rho a\big(\HAR(\rho)\big)<0
			$ with~$\gamma>0$. 
			The first-order correction can be written as
			$$
			\partial_t \ueq + \partial_x \feq\big(\ueq\big)  
			=
			\varepsilon \partial_x
			\Big(
			D\big(\ueq \big) \partial_x \ueq \Big) 
			\quad
			\text{for}
			\quad
			\feq(\rho,z)=
			\begin{pmatrix}
				z-\rho Q(\rho) \\
				\frac{z^2}{\rho}-zQ(\rho)
			\end{pmatrix}
			$$
			where the diffusion matrix reads as
			\begin{align*}
				&D\big(\ueq \big) = \mu(\rho) 
				\begin{pmatrix} 
					-\big(\veq + Q(\rho) + q(\rho)\big) & 1 \\
					-\big(\veq + Q(\rho)\big)\big(\veq + Q(\rho) + q(\rho)\big) & \veq + Q(\rho) 
				\end{pmatrix} \\
				&
				\text{with  eigenvalues}
				\quad
				\lambda_1^{\textup{D}} = 0, \quad \lambda_2^{\textup{D}}(\ueq) = \Big(
				\lambdaeq_1(\ueq) - \lambdaf_1(\ueq)
				\Big) q(\rho) 
                =
                -\mu(\rho)q(\rho)\\
				&
				\text{for }\quad
				Q(\rho)
				\coloneqq 
				a\big(
				\HAR(\rho)
				\big) P(\rho), \quad
				q(\rho)\coloneqq
				Q'(\rho) \rho \\
				&
				\text{and}\quad
				\lambdaeq_1(\rho,v)
				\coloneqq
				v
				-
				q(\rho), \quad \ \ \,
				\lambdaf_1(\rho,v) 
				\coloneqq
				\lambda_1\big(\rho,
				v,\HAR(\rho)	\big). 
			\end{align*}

		\end{theorem}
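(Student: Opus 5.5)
The plan is to substitute the first-order Chapman--Enskog representation of Lemma~\ref{Oldpaper} into the velocity $v(\rho,z,w)$ in the first two (unrelaxed) equations of \eqref{SystemRelaxAwRascle}, and then to move the resulting $\mathcal{O}(\varepsilon)$ flux corrections to the right-hand side. The key observation is that $v$ depends on $w$ only through $a(h)$ with $h=w/\rho$. Lemma~\ref{Oldpaper} gives
$$
v(\rho,z,w)=\frac{z}{\rho}-a(h)P(\rho)
=\veq(\rho,z)-\varepsilon\, a'\big(\HAR(\rho)\big)\HAR'(\rho)\rho P(\rho)\,\partial_x\veq+\mathcal{O}(\varepsilon^2)
=\veq-\varepsilon\mu(\rho)\partial_x\veq+\mathcal{O}(\varepsilon^2),
$$
where $\veq=v(\rho,z,\WAR(\rho))$. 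Multiplying by $\rho$ and by $z$ and inserting into the fluxes gives $\rho v=\rho\veq-\varepsilon\rho\mu\partial_x\veq+\mathcal{O}(\varepsilon^2)$, and analogously for $z v$. This yields the stated pair of equations with right-hand sides $\varepsilon\partial_x(\rho\mu\partial_x\veq)$ and $\varepsilon\partial_x(z\mu\partial_x\veq)$. Here I assume enough smoothness of the solution that $\partial_x$ of the $\mathcal{O}(\varepsilon^2)$ remainder remains $\mathcal{O}(\varepsilon^2)$. This is the formal step I expect to be the main (though standard) obstacle, and I would simply state it as the usual Chapman--Enskog hypothesis.

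For the sign of $\mu$, I would note that $\mu(\rho)=\partial_\rho\big[a(\HAR(\rho))\big]\,P(\rho)\,\rho$ by the chain rule. For $\gamma>0$ we have $P(\rho)=\rho^\gamma/\gamma>0$ and $\rho>0$, so $\mu<0$ exactly when $\partial_\rho a(\HAR(\rho))<0$.

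For the matrix form, I would write $\feq=(\rho\veq,z\veq)^\T=(z-\rho Q,\ z^2/\rho-zQ)^\T$ and express $\partial_x\veq$ in the conserved variables. From $\veq=z/\rho-Q(\rho)$ one gets
$$
\partial_x\veq=\frac{1}{\rho}\partial_x z-\Big(\frac{z}{\rho^2}+Q'(\rho)\Big)\partial_x\rho .
$$
Using $z/\rho=\veq+Q$ and $q=Q'\rho$, this gives $\rho\mu\partial_x\veq=\mu\big(\partial_x z-(\veq+Q+q)\partial_x\rho\big)$, which is the first row of $D$. Since $z=\rho(\veq+Q)$, the second row is $(\veq+Q)$ times the first row, matching the stated $D$.

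For the spectrum, the matrix $D$ has rank one because its rows are proportional, so $\lambda_1^{\textup{D}}=0$. The other eigenvalue equals the trace, $\mu\big(-(\veq+Q+q)+(\veq+Q)\big)=-\mu q$. Finally, I would verify the identity with the characteristic speeds. We have $\lambdaf_1=v-a(\HAR)P'\rho$ and $\lambdaeq_1=v-Q'\rho$. Differentiating $Q=a(\HAR)P$ gives $Q'=a'(\HAR)\HAR'P+a(\HAR)P'$. Hence
$$
\lambdaeq_1-\lambdaf_1=-a'(\HAR)\HAR'P\rho=-\mu,
$$
and therefore $\lambda_2^{\textup{D}}=(\lambdaeq_1-\lambdaf_1)q=-\mu q$, completing the proof.
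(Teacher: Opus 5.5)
Your proposal is correct and follows the paper's route: insert the first-order expansion of $a(h)$ from Lemma~\ref{Oldpaper} into $v(\rho,z,w)=\veq-\varepsilon\mu(\rho)\partial_x\veq+\mathcal{O}(\varepsilon^2)$, and hence into the fluxes $\rho v$ and $zv$, then move the $\mathcal{O}(\varepsilon)$ terms to the right-hand side. Beyond the paper's terse argument, you also explicitly verify the sign of $\mu$ (via $P>0$ for $\gamma>0$), the matrix form of $D$, its rank-one spectrum via the trace, and the identity $\lambdaeq_1-\lambdaf_1=-\mu(\rho)$, all of which the paper leaves implicit.
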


		\begin{proof}
			Lemma~\ref{Oldpaper} implies for the second equation the expression
			\begin{align*}
				&	\partial_t z
				+
				\partial_x \Big(
				\frac{z^2}{\rho}
				-
				za(h)P(\rho)
				\Big)\\
				=
				&	\partial_t z
				+
				\partial_x \Big(
				\frac{z^2}{\rho}
				-
				za\big(\HAR(\rho)\big)
				P(\rho) \Big)
				-
				\varepsilon
				\partial_x
				\Big(
				z\rho	a'\big(\HAR(\rho)\big)
				\HAR'(\rho) P(\rho)
				\partial_x v\big(\rho,z,\WAR(\rho)\big)	\Big)
				+
				\mathcal{O}\big(
				\varepsilon^2
				\big).
			\end{align*}
			Furthermore, Lemma~\ref{Oldpaper} yields
			\begin{equation*}    
				v(\rho,z,w)
				=\frac{z}{\rho}-
				a\big(\HAR(\rho)\big) P(\rho)
				- \varepsilon \rho
				a'\big(\HAR(\rho)\big)
				\HAR'(\rho)
				P(\rho)		\partial_x v\big(\rho,z,\WAR(\rho)\big)	+
				\mathcal{O}\big(\varepsilon^2\big).
			\end{equation*}
			Hence, we have for the first equation the expression
			\begin{align*}    
				0&=
				\partial_t \rho + \partial_x \Big(
				\rho v\big(\rho,z,w\big)
				\Big) \\
				&=
				\partial_t \rho + \partial_x \Big(
				\rho v\big(\rho,z,\WAR(\rho)\big)
				\Big) \\
				&\quad -
				\varepsilon
				\partial_x
				\Big(\rho^2
				a'\big(\HAR(\rho)\big)
				\HAR'(\rho) P(\rho) 
				\partial_x v\big(\rho,z,\WAR(\rho)\big)	\Big)
				+
				\mathcal{O}\big(
				\varepsilon^2
				\big).
			\end{align*}   
			Then, the claim follows from~$z=\rho\big(
			v+a(h)P(\rho) \big)>0$.
			
			\hfill	
		\end{proof}

		\medskip

		\begin{remark}[Diffusion matrix]\label{RemarkDiffusion}	The diffusion matrix admits the rank-one dyadic factorisation 
			$$
			D(\rho, v) = \boldsymbol{\ell}(\rho, v) \boldsymbol{r}^\T(\rho, v)
			\ \ \text{with} \ \
			\boldsymbol{\ell}(\rho, v) = \mu(\rho) 
			\begin{pmatrix}
				1 \\  v + Q(\rho)
			\end{pmatrix}, 
			\ \
			\boldsymbol{r}(\rho, v) = \begin{pmatrix} -v - Q(\rho) - q(\rho) \\  1 \end{pmatrix}. 
			$$
			Due to~$z^{\textup{eq}} 
			=
			\rho 
			\big(
			\veq + Q(\rho) 
			\big)$ and 
			$
			\partial_x z^{\textup{eq}}  = \big(v + Q(\rho) + q(\rho)\big) \partial_x \rho + \rho  \partial_x \veq 
			$, 
			the equality~$
			\boldsymbol{r}^\T(\rho, v) 
			\partial_x \ueq 
			=
			\rho \partial_x \veq
			$
			reveals an exact cancellation of the density gradient terms. 		Algebraically, any purely density-driven gradient field, i.e.~$\partial_x \rho \neq 0$ with $\partial_x v = 0$, lies strictly within the  nullspace of the diffusion matrix. 
			Hence, the diffusion matrix guarantees that the parabolic smoothing  restores equilibrium by suppressing spatial velocity fluctuations  without introducing artificial diffusion into the vehicle density $\rho$ provided that the equalities~
 $$
 \mu(\rho)<0 
 \quad
 \text{and}
 \quad
\lambda_2^{\textup{D}}(\ueq) = \Big(
				\lambdaeq_1(\ueq) - \lambdaf_1(\ueq)
				\Big) q(\rho) \geq 0,
\quad
\text{i.e.}
\quad
\lambdaeq_1(\ueq) \geq \lambdaf_1(\ueq)
 $$ 
 hold, which is discussed in the following subsection.

		\end{remark}

		\medskip
		
\subsection{Frozen characteristic speeds and equilibrium system.} \label{SectionFrozen}
		On the equilibrium manifold~$h=\HAR(\rho)$
		the homogeneous relaxation system, i.e.~the Piu model, has the \textbf{frozen} characteristic speeds
		\begin{equation*}
			\lambda_1^{\textup{f}}\big(\rho,v\big)
			=
			\lambda_1\big(\rho,v,\HAR(\rho) \big)
			=
			v- a\big(\HAR(\rho)\big)P'(\rho)\rho,
			\quad
			\lambda_2^{\textup{f}}(v)=\lambda_3^{\textup{f}}(v)=\lambda_v(v)= v.
		\end{equation*}
		In the relaxation limit, 
		the equilibrium flux~$\feq$ has  the equilibrium characteristic speeds 
		\begin{equation*}
			\feq(\rho,z)
			=
			\begin{pmatrix}
				z-\rho Q(\rho) \\
				\frac{z^2}{\rho}-zQ(\rho)
			\end{pmatrix}
			\quad\text{with}\quad
			\lambda_1^{\textup{eq}}(\rho,v)
			=
			v-\rho Q'(\rho)
			\quad\text{and}\quad
			\lambda_2^{\textup{eq}}(v)
			=
			\lambda_v(v)=v.
		\end{equation*}
Since we have the relation~$
		\lambda_1^{\textup{eq}}(\rho,v)
		=
		v- a\big(\HAR(\rho)\big)P'(\rho)\rho
		-\mu(\rho)
		=
		\lambdaf_1 (\rho,v)
		-
		\mu(\rho)
		$ for the coefficient $
		\mu(\rho)
		= a'\big(\HAR(\rho)\big)\HAR'(\rho)P(\rho)\rho
		<0
		$ in the case $
   		\partial_\rho a\big(\HAR(\rho)\big)<0$,      
		Theorem~\ref{TheoremAwRascle1} satisfies the inequalities
		$$
		\lambdaf_1 (\rho,v)
		\leq 
		\lambda_1^{\textup{eq}}(\rho,v)
		\leq 
		\lambdaf_2(v)=\lambdaf_3(v)=v. 
		$$
		This  is the appropriate subcharacteristic ordering which guarantees that the equilibrium characteristic cone is contained in the characteristic cone of the full relaxation system. 
		In particular,  the characteristic cone of the equilibrium system is
contained in that of the full relaxation system, which is the 
appropriate subcharacteristic condition for stability. 
Lemma~\ref{Oldpaper} implies the relation 
	$$	v
	=
	\veq(\rho)
	-\varepsilon
	a'\big(\HAR(\rho)\big)\HAR'(\rho)P(\rho)\rho
	\partial_x 
	\veq(\rho)
	+\mathcal{O}\big(\varepsilon^2\big)
	=
	\veq(\rho)
	-\varepsilon\mu(\rho)\partial_x 		\veq(\rho)
	+\mathcal{O}\big(\varepsilon^2\big), 
	$$
	where~$
	\mu(\rho)
	=
	a'\big(\HAR(\rho)\big)
	\HAR'(\rho)P(\rho)\rho<0
	$ accounts for positive diffusion in Theorem~\ref{TheoremAwRascle1}.  This means that 
the finite relaxation of the driver-dependent property produces a gradient correction with respect to spatial variations of the equilibrium velocity that arise from the term~$\partial_x \veq$.  This following Section~\ref{SectionRelaxNonCons}  analyzes these variations.

\medskip
\begin{remark}[Frozen and equilibrium velocity]
The frozen velocity refers to the velocity at the state where the system is frozen for the analysis of its characteristic speeds. If the system is frozen at an equilibrium state, i.e.~for
$
h=H(\rho)
$, 
 the frozen velocity coincides with the equilibrium velocity~$
v^{\textup{f}}=\veq(\rho,z)
=\nicefrac{z}{\rho}-a\big(\HAR(\rho)\big)P(\rho).
$
Consequently, the frozen characteristic velocities evaluated at an equilibrium state can be expressed in terms of the equilibrium velocity. In particular, it holds 
$
\lambda_2^{\textup{f}}(v)=\lambda_3^{\textup{f}}(v)=v{^\textup{f}}=\veq.
$ 
Outside the equilibrium manifold, the frozen velocity does not in general coincide with the equilibrium velocity and we have
$$
v^{\textup{f}}
=
v(\rho,z,w)
=
\frac{z}{\rho}-a\Big(\frac{w}{\rho}\Big)P(\rho)
\ \neq \
\frac{z}{\rho}-a\big(\HAR(\rho)\big)P(\rho)
=\veq(\rho,z). 
$$

\end{remark}

	\subsection{Microscopic relaxation and the macroscopic non-conservative limit.} \label{SectionRelaxNonCons}
We now introduce the relaxation mechanism directly at the microscopic
		level and verify that its hydrodynamic limit yields the relaxed
		macroscopic system in Theorem~\ref{TheoremAwRascle1}.  
On the microscopic level, drivers are  allowed to adjust their acceleration behaviour according  to the relaxation
\begin{equation}\label{hRelaxationMicro}
			\dot h_i
			= -
			\frac{h_i-\HAR(\rho_i)}{\varepsilon}.
		\end{equation}
		Since the quantity $
		\omega_i=v_i+a(h_i)P(\rho_i)
		$ remains invariant,~i.e.~$\dot{\omega}=0$, it holds
		\begin{align}
			&0
			=
			\dot{v}_i
			+a'(h_i)P(\rho_i)\dot{h}_i
			+a(h_i)P'(\rho_i)\dot{\rho}_i
			\quad
			\text{for}
			\quad
			\dot{\rho}_i
			=
			-\rho_i
			\frac{v_{i+1}-v_i}{x_{i+1}-x_i}\nonumber\\
			&\Leftrightarrow
			\qquad
			\dot{v}_i
			=
			a(h_i)P'(\rho_i)\rho_i
			\frac{v_{i+1}-v_i}{x_{i+1}-x_i}
			+
			a'(h_i)P(\rho_i)
			\frac{h_i-\HAR(\rho_i)}{\varepsilon}.\label{velocityRelaxation}
		\end{align}
		In the hydrodynamic limit~the invariant property~$\dot{\omega}_i=0$ still implies the second equation in the  conservative formulation~\eqref{PiuS3}, i.e.~$
		\partial_t z + \partial_x (v z) =0$.  The remaining equation of the relaxation system in Theorem~\ref{TheoremAwRascle1} 
		follows from~$
		\dot{w}=
		\partial_t w + v \partial_x w 
		$, $h\dot{\rho}= -w \partial_x v$ 
		and
		\begin{equation}
			\label{SecReltemp1}
			\partial_t w 
			+
			\partial_x(vw)
			=
			\dot{w}
			+
			w \partial_x v
			=
			h \dot{\rho}
			+
			\rho\dot{h}
			+w \partial_x v 
			=
			\rho \dot{h}
			=-
			\frac{w-\WAR(\rho)}{\varepsilon}.
		\end{equation}
		Note that we passed from the microscopic relaxation~\eqref{hRelaxationMicro} to the macroscopic one in Theorem~\ref{TheoremAwRascle1} over the invariants, which result in a conservative form in Lagrangian coordinates. Hence, we circumvented the equation~\eqref{velocityRelaxation} for the velocity. Without a formal proof, this equation indicates that the relaxation~\eqref{hRelaxationMicro} comes along with a relaxation in the velocity, namely~
		\begin{equation*}
			\partial_t v+ \Big(v-a(h)P'(\rho)\rho \Big)\partial_x v
			=
			a'(h)P(\rho)
			\frac{h-\HAR(\rho)}{\varepsilon}.
		\end{equation*}
		Indeed, we obtain the expression also formally by rewriting the 
		macroscopic equations in 
		Theorem~\ref{TheoremAwRascle1}. 
		Due to equation~\eqref{SecReltemp1}, we have the relation~$\dot{h}
		=-
		\frac{1}{\varepsilon}
		\big(h-
		\HAR(\rho)
		\big)
		$ 
		and hence
		$$
		\partial_t v
		+
		v
		\partial_x v
		=
		\dot{v}=
		-a(h)P'(\rho)\dot{\rho}
		-a'(h)P(\rho) \dot{h}
		=
		a(h)P'(\rho)\rho \partial_xv
		+a'(h)P(\rho)\frac{h-\HAR(\rho)}{\varepsilon}.
		$$

		
		\begin{lemma}[Second-order Chapman-Enskog expansion]
			\label{LemmaSecondOrder}
			Let a local equilibrium ${\HAR\in C^3\big(\mathbb{R}^+\big)}$ be given and assume that the function $a\in C^2$ is twice continuously differentiable in a neighbourhood of $\HAR(\rho)$. Then, the second-order local equilibrium approximation reads as
			\begin{align*}
				h=\HAR
				+\varepsilon\rho \HAR'(\rho)\alpha
				&+\varepsilon^2
				\Big(
				\rho\big(2\HAR'(\rho)+\rho \HAR''(\rho)\big)\alpha^2
				-\rho \HAR'(\rho) q(\rho)\partial_x\alpha\\
				&\qquad
				-\rho \HAR'(\rho)q'(\rho)\alpha\,\partial_x\rho
				+a'\big(\HAR(\rho)\big)P(\rho)\rho \HAR'(\rho)^2
				\alpha\,\partial_x\rho
				\Big)
				+\mathcal{O}\big(\varepsilon^3\big),
			\end{align*}
			where the 
			spatial derivative of the equilibrium velocity~$
			\alpha=\partial_x\veq
			$ is given by the  equilibria $\veq(\rho,z) = v\big(\rho,z,\WAR(\rho)\big)$ and  $\WAR(\rho)=\rho\HAR(\rho)$.
		\end{lemma}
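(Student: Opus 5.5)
We work with the relaxation equation for $h$ in non-conservative form, $\partial_t h + v\partial_x h = -\varepsilon^{-1}\bigl(h-\HAR(\rho)\bigr)$, which is equivalent to the $w$-equation in \eqref{SystemRelaxAwRascle} by \eqref{SecReltemp1}. We insert the ansatz $h=\HAR(\rho)+\varepsilon h^{(1)}+\varepsilon^2 h^{(2)}+\mathcal{O}(\varepsilon^3)$. Solving for the deviation gives
$$
h-\HAR(\rho) = -\varepsilon\bigl(\partial_t h+v\partial_x h\bigr)
= -\varepsilon\bigl(\dot{\HAR(\rho)} + \varepsilon \dot{h^{(1)}}\bigr)+\mathcal{O}(\varepsilon^3),
$$
where $\dot{(\cdot)}=\partial_t+v\partial_x$ denotes the material derivative along the \emph{true} velocity $v$. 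Two ingredients must then be expanded to the needed order: the material derivative of $\rho$, and the expansion of $v$ itself.

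For the first ingredient, mass conservation gives $\dot\rho=-\rho\partial_x v$ exactly. For the second, Lemma~\ref{Oldpaper} and Section~\ref{SectionFrozen} give $v=\veq-\varepsilon\mu(\rho)\alpha+\mathcal{O}(\varepsilon^2)$. Since $\mu(\rho)=a'(\HAR)\HAR'P\rho$, we obtain
$$
\dot{\HAR(\rho)}=-\HAR'(\rho)\rho\bigl(\alpha-\varepsilon\partial_x(\mu\alpha)\bigr)+\mathcal{O}(\varepsilon^2).
$$
At order $\varepsilon$ this reproduces $h^{(1)}=\rho\HAR'(\rho)\alpha$, as in Lemma~\ref{Oldpaper}. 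At order $\varepsilon^2$ the term $\rho\HAR'\partial_x(\mu\alpha)$ contributes the piece $a'(\HAR)P\rho\HAR'^2\alpha\,\partial_x\rho$. I expect the other pieces of $\partial_x(\mu\alpha)$ to combine with the evolution of $\alpha$ below, so they will need careful bookkeeping.

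The main step is the second-order term $-\dot{h^{(1)}}$, which only requires the leading-order (equilibrium) dynamics. Writing $h^{(1)}=G(\rho)\alpha$ with $G=\rho\HAR'$, we have $\dot{h^{(1)}}=G'(\rho)\dot\rho\,\alpha+G\dot\alpha$ with $\dot\rho=-\rho\alpha+\mathcal{O}(\varepsilon)$. Since $G'=\HAR'+\rho\HAR''$, this contributes $\rho(\HAR'+\rho\HAR'')\alpha^2$. For $\dot\alpha$ we differentiate the equilibrium velocity equation $\partial_t\veq+(\veq-q(\rho))\partial_x\veq=\mathcal{O}(\varepsilon)$ in $x$. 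This follows from the equilibrium flux $\feq$ in Theorem~\ref{TheoremAwRascle1}, with $\lambdaeq_1=v-q$. Differentiating yields $\dot\alpha=-\alpha^2+q(\rho)\partial_x\alpha+q'(\rho)\alpha\partial_x\rho+\mathcal{O}(\varepsilon)$. Multiplying by $-G$ produces $+\rho\HAR'\alpha^2-\rho\HAR'q\partial_x\alpha-\rho\HAR'q'\alpha\partial_x\rho$. Adding the $\alpha^2$ contributions gives $\rho(2\HAR'+\rho\HAR'')\alpha^2$, matching the statement.

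The hard part is the consistent treatment of the $\mathcal{O}(\varepsilon)$ velocity correction. The difference between $v$ and $\veq$ enters both $\dot\rho$ and the transport of $\alpha$, so one must decide which pieces of $\partial_x(\mu\alpha)$ survive and which cancel. I would first check against the statement that only the $\mu$-coefficient derivative term $a'(\HAR)P\rho\HAR'^2\alpha\partial_x\rho$ remains. If residual terms such as $\mu\partial_x\alpha$ appear, I would absorb them by noting that $\alpha$ is defined via $\veq(\rho,z)$, whose material derivative is taken along $v$ rather than $\veq$. Regularity is needed throughout: $\HAR\in C^3$ and $a\in C^2$ justify the Taylor remainders, as in \eqref{Taylor}, and the $\mathcal{O}(\varepsilon^3)$ bound.
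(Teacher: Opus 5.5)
Your outline follows the paper's proof. You use the ansatz $h=\HAR(\rho)+\varepsilon h^{(1)}+\varepsilon^2h^{(2)}$ with $h^{(1)}=\rho\HAR'\alpha$. You then evaluate $-\dot{h^{(1)}}$ with leading-order dynamics, namely mass conservation and the $x$-derivative of $\partial_t\veq+(\veq-q)\partial_x\veq=0$. This correctly gives $\rho(2\HAR'+\rho\HAR'')\alpha^2-\rho\HAR'q\,\partial_x\alpha-\rho\HAR'q'\alpha\,\partial_x\rho$. The gap is the last $\varepsilon^2$ term, which you defer to ``bookkeeping''; that step fails. With your exact identity $\dot{\HAR(\rho)}=-\rho\HAR'\partial_xv$ and $v=\veq-\varepsilon\mu\alpha+\mathcal{O}(\varepsilon^2)$, the contribution to $h^{(2)}$ is
\[
-\rho\HAR'\partial_x(\mu\alpha)=-\rho\HAR'\mu'(\rho)\,\alpha\,\partial_x\rho-\rho\HAR'\mu\,\partial_x\alpha .
\]
The only piece of the stated form comes from differentiating the explicit factor $\rho$ in $\mu=a'(\HAR)\HAR'P\rho$. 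It appears as $-a'(\HAR)P\rho(\HAR')^2\alpha\,\partial_x\rho$, i.e.\ with the opposite sign to the statement. The remaining pieces are $-\rho\HAR'\mu\,\partial_x\alpha$ and the $a''$, $\HAR''$, $P'$ parts of $\mu'$. These cannot be absorbed as you suggest, because the transport of $\alpha$ enters only through $\varepsilon^2\dot{h^{(1)}}$. Transporting along $v$ rather than $\veq$ therefore changes $h$ only at $\mathcal{O}(\varepsilon^3)$. Done correctly, your computation does not reach the stated formula.

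The paper gets its last term from a different bookkeeping. It splits $\partial_t+v\partial_x=(\partial_t+\veq\partial_x)+(v-\veq)\partial_x$, so that $-\varepsilon(v-\veq)\partial_x\HAR(\rho)=\varepsilon^2\mu\HAR'\alpha\,\partial_x\rho=\varepsilon^2a'(\HAR)P\rho(\HAR')^2\alpha\,\partial_x\rho$. It then evaluates $(\partial_t+\veq\partial_x)\rho=-\rho\alpha$ from the \emph{uncorrected} equilibrium mass balance. That step discards the first-order flux correction $\varepsilon\partial_x(\rho\mu\alpha)$ of Theorem~\ref{TheoremAwRascle1}. Restoring it adds $\HAR'\partial_x(\rho\mu\alpha)$ to $-h^{(2)}$, and $\HAR'\partial_x(\rho\mu\alpha)-\mu\HAR'\alpha\,\partial_x\rho=\rho\HAR'\partial_x(\mu\alpha)$ is exactly your term. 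So the statement as written can only be reproduced under the paper's convention, and your exact treatment of $\dot\rho$ indicates the stated coefficient is not the consistent one.

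A linearization about a constant equilibrium confirms this; take $\omega$ unperturbed and work in the frame moving with the background speed. The exact dispersion relation is $s=ik\bigl(B+\mu/(1+\varepsilon s)\bigr)$ with $B=a(\HAR)P'\rho$. Its slow root is $s=ikq+\varepsilon k^2\mu q-i\varepsilon^2k^3\mu q(q+\mu)+\mathcal{O}(\varepsilon^3)$. This forces the coefficient of $\partial_x\alpha$ in $h^{(2)}$ to be $-\rho\HAR'(q+\mu)$ rather than $-\rho\HAR'q$. You should either prove the corrected expansion, with $-\rho\HAR'\partial_x(\mu\alpha)$ as the last term, or state explicitly that the $\varepsilon^2$ term is computed with the leading-order mass balance.
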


		\medskip
		
		\begin{proof}
			Following the Chapman--Enskog expansion used in Lemma~\ref{Oldpaper}, we write
			$$
			h=\HAR(\rho)+\varepsilon h_1+\varepsilon^2h_2
			+O\big(\varepsilon^3\big)	
			=
			\HAR(\rho)+\varepsilon
			\HAR'(\rho) \rho \partial_x \veq(\rho,z)
			+\varepsilon^2h_2
			+O\big(\varepsilon^3\big).
			$$
			At the next order, the relaxation equation gives
			\begin{equation}
				\label{Lemmah2}
				-
				h_2
				=
				\partial_t\big(\rho \HAR'(\rho)\alpha\big)
				+
				\veq(\rho,z) \partial_x
				\big(\rho \HAR'(\rho)\alpha\big)
				-
				a'\big(\HAR(\rho)\big)
				P(\rho) \rho  \HAR'(\rho)^2 \alpha \partial_x \rho.
			\end{equation}
			Differentiating the equilibrium velocity equation 
			$$
			\partial_t\veq
			+
			\Big(
			\veq-q(\rho)
			\Big)
			\partial_x \veq
			=
			0
			\quad
			\text{for}
			\quad
			q(\rho) = B(\rho) + \mu(\rho),\quad
			B(\rho)= a\big(\HAR(\rho)\big)P'(\rho)\rho
			$$
			with respect to space~$x$ gives
			$$
			\partial_t\alpha
			+\veq
			\partial_x\alpha
			=
			q(\rho)\alpha_x
			+q'(\rho) \alpha
			\rho_x
			-\alpha^2.
			$$
			The claim follows from conservation of mass~$
			\partial_t\rho+v_{\mathrm{eq}}\partial_x\rho
			=-\rho\alpha
			$, which implies the identity
			$
			\partial_t\big(\rho \HAR'(\rho)\big)
			+\veq \partial_x\big(\rho \HAR'(\rho)\big)
			=
			-\rho\big(\HAR'(\rho)+\rho \HAR''(\rho)\big)\alpha
			$ 
			and~equation~\eqref{Lemmah2}, which yields
			\begin{align*}
&				h_2
				=
				\rho 
				\Big(2\HAR'(\rho)+\rho\HAR''(\rho)
				\Big)\alpha^2
				-\rho \HAR'(\rho)q(\rho)\partial_x\alpha 
				-\rho\HAR'(\rho)q'(\rho)\alpha\partial_x\rho \\
&\qquad				+a'\big(\HAR(\rho)\big)
				P(\rho) \rho  \HAR'(\rho)^2 \alpha \partial_x \rho.			
			\end{align*}

			\hfill
		\end{proof}

		\begin{theorem}[Viscous Hamilton-Jacobi system]\label{TheoremAwRascle2}
			Let an equilibrium~$
			\HAR(\rho)
			$ 
			be given and assume the function~$a$ is twice continuously differentiable in a small neighbourhood around~$\HAR(\rho)$. 
			Then, the first-order correction to the local equilibrium approximation of the relaxation
			\begin{equation*}
				\begin{cases}
					\begin{aligned}
						\partial_t \rho + \partial_x(\rho v)&=0,\\
						\partial_t v+ \Big(v-a(h)P'(\rho)\rho\Big)\partial_x v
						&= a'(h)P(\rho)\frac{h-\HAR(\rho)}{\varepsilon}, \\
						\partial_t h 
						+
						v\partial_x h
						&=-
						\frac{h-\HAR(\rho)}{\varepsilon}
					\end{aligned}
				\end{cases}
			\end{equation*}
			reads as
			\begin{align*}
				&\partial_{t}v + \Big(v - q(\rho)\Big)\partial_{x}v 
				=
				\varepsilon \mu(\rho)
				\Big[	
				\underbrace{C(\rho) (\partial_x v)^2 }_{\text{Hamilton-Jacobi}}
				-
				\underbrace{M(\rho)^{-1} \partial_x \big( M(\rho) B(\rho) \partial_x v \big) }_{\text{diffusion}} 
				\Big]	
				+ \mathcal{O}\big(\varepsilon^2\big),\\
				&\widetilde{C}(\rho)\coloneqq
				\frac{2\HAR'(\rho)+\rho \HAR''(\rho)}{\HAR'(\rho)}
				+\rho \HAR'(\rho)
				\frac{a''\big(\HAR(\rho)\big)}{a'\big(\HAR(\rho)\big)}, \quad
				C(\rho)
				\coloneqq \widetilde{C}(\rho) + 
				\rho \frac{P'(\rho)}{P(\rho)}, \\
&				q(\rho)
				= 
				a\big(\HAR(\rho)\big)P'(\rho)\rho
				+
				\mu(\rho), \quad {B(\rho) = q(\rho)-\mu(\rho)}, \quad
{M(\rho) \coloneqq \exp \left( - \int \frac{\mu(\rho)}{\rho B(\rho)}  \d\rho \right)}.
			\end{align*}
			
		\end{theorem}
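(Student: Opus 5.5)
The plan is to insert the second-order Chapman--Enskog expansion of Lemma~\ref{LemmaSecondOrder} into the velocity equation of the relaxation system and collect terms order by order. I write $h-\HAR(\rho)=\varepsilon h_1+\varepsilon^2 h_2+\mathcal{O}(\varepsilon^3)$ with $h_1=\rho\HAR'(\rho)\alpha$, $\alpha=\partial_x\veq$, and $h_2$ as in Lemma~\ref{LemmaSecondOrder}. Taylor expansion of $a$ and $a'$ around $\HAR(\rho)$, as in \eqref{Taylor}, gives
\begin{align*}
a'(h)P(\rho)\frac{h-\HAR(\rho)}{\varepsilon}
&=a'(\HAR)P h_1+\varepsilon\big(a'(\HAR)P h_2+a''(\HAR)P h_1^2\big)+\mathcal{O}(\varepsilon^2),\\
a(h)P'(\rho)\rho&=B(\rho)+\varepsilon a'(\HAR)\rho^2\HAR'(\rho)P'(\rho)\alpha+\mathcal{O}(\varepsilon^2).
\end{align*}
The leading source term is $a'(\HAR)P\rho\HAR'\alpha=\mu(\rho)\alpha$.

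Next I replace $\alpha=\partial_x\veq$ by $\partial_x v$. Lemma~\ref{Oldpaper}, in the form recorded in Section~\ref{SectionFrozen}, gives $v=\veq-\varepsilon\mu\alpha+\mathcal{O}(\varepsilon^2)$. Hence $\alpha=\partial_x v+\varepsilon\partial_x(\mu\,\partial_x v)+\mathcal{O}(\varepsilon^2)$, and inside $\mathcal{O}(\varepsilon)$ terms $\alpha$ may simply be replaced by $\partial_x v$. Moving $\mu\,\partial_x v$ to the left combines with $-B\,\partial_x v$ into $-(B+\mu)\partial_x v=-q\,\partial_x v$. This produces the transport operator $\partial_t v+(v-q)\partial_x v$. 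The correction $\varepsilon\mu\,\partial_x(\mu\partial_x v)$ remains on the right.

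The main step is regrouping the $\mathcal{O}(\varepsilon)$ terms, and I expect it to be the main bookkeeping obstacle. I use the identity $a'(\HAR)P=\mu/(\rho\HAR')$ throughout, so that every term carries the factor $\mu$. The quadratic contributions in $(\partial_x v)^2$ come from three places:
\begin{itemize}
\item the term $\rho(2\HAR'+\rho\HAR'')\alpha^2$ in $h_2$, which gives $\mu\,(2\HAR'+\rho\HAR'')/\HAR'$;
\item the term $a''Ph_1^2$, which gives $\mu\rho\HAR' a''/a'$;
\item the frozen-speed correction $a'\rho^2\HAR'P'\alpha\partial_x v$ moved to the right, which gives $\mu\rho P'/P$.
\end{itemize}
Together these form exactly $\mu C(\rho)(\partial_x v)^2$.

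The remaining terms are $\mu\big[-q\partial_x^2 v-q'\rho_x\partial_x v+\frac{\mu}{\rho}\rho_x\partial_x v\big]$ from $h_2$, together with $\mu\,\partial_x(\mu\partial_x v)=\mu[\mu\partial_x^2 v+\mu'\rho_x\partial_x v]$. Using $q-\mu=B$, they collapse to $-\mu\big[\partial_x(B\partial_x v)-\frac{\mu}{\rho}\rho_x\partial_x v\big]$. Finally, $M'/M=-\mu/(\rho B)$ gives $M^{-1}\partial_x(MB\partial_x v)=\partial_x(B\partial_x v)-\frac{\mu}{\rho}\rho_x\partial_x v$, which is the stated diffusion term.

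I will check the signs of each contribution carefully, in particular the sign of the $\varepsilon\partial_x(\mu\alpha)$ correction and the sign of the $a'P\HAR'^2$ term in $h_2$. These are precisely what allow the cancellation $q-\mu=B$ and the identification of the integrating factor $M$.
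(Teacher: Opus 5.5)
Your proposal is correct and follows the paper's proof essentially step for step. You insert Lemma~\ref{LemmaSecondOrder} into the source term and Taylor-expand $a(h)$ and $a'(h)$ about $\HAR(\rho)$. You then use $v=\veq-\varepsilon\mu(\rho)\alpha+\mathcal{O}(\varepsilon^2)$ to trade $\alpha$ for $\partial_x v$, absorb $\mu(\rho)\partial_x v$ into $q=B+\mu$, and collect the three $(\partial_x v)^2$ contributions into $\mu(\rho)C(\rho)$; your frozen-speed term is the paper's auxiliary $\phi(\rho)$ with $\phi/\mu=\rho P'/P$. Finally you identify the remainder as $-M^{-1}\partial_x(MB\,\partial_x v)$ via $q-\mu=B$ and $M'/M=-\mu/(\rho B)$, and your signs for the $\varepsilon\,\partial_x(\mu\,\partial_x v)$ correction and for the term $a'\big(\HAR(\rho)\big)P(\rho)\rho\HAR'(\rho)^2\alpha\,\partial_x\rho$ in $h_2$ are the right ones.
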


		\begin{proof}
				Lemma~\ref{LemmaSecondOrder} yields 
				\begin{equation*}
					\begin{aligned}
						h=\HAR
						+\varepsilon\rho \HAR'(\rho)\alpha
						&+\varepsilon^2
						\Big(
						\rho\big(2\HAR'(\rho)+\rho \HAR''(\rho)\big)\alpha^2
						-\rho \HAR'(\rho) q(\rho)\partial_x\alpha\\
						&
						-\rho \HAR'(\rho)q'(\rho)\alpha\,\partial_x\rho
						+a'\big(\HAR(\rho)\big)P(\rho)\rho \HAR'(\rho)^2
						\alpha\,\partial_x\rho
						\Big)
						+\mathcal{O}\big(\varepsilon^3\big).
					\end{aligned}
				\end{equation*}
				Hence, the relaxation term satisfies
				\begin{equation*}	
				\begin{aligned}
					& a'(h)P(\rho)\frac{h-\HAR(\rho)}{\varepsilon} \nonumber\\
					&\quad
					=\mu(\rho)\,\alpha
					+\varepsilon\mu(\rho)
					\Big[
					\widetilde{C}(\rho)\alpha^2
					-\Big(
					q'(\rho)-\frac{\mu(\rho)}{\rho}
					\Big)\alpha\,\partial_x\rho
					-q(\rho)\,\partial_x\alpha
					\Big]
					+\mathcal{O}\big(\varepsilon^2\big).
				\end{aligned}
			\end{equation*}
				By Lemma~\ref{Oldpaper}, the velocity satisfies
\begin{alignat}{5}
&\quad v
&&=
\veq
&&-\varepsilon\mu(\rho)\alpha
+\mathcal{O}\big(\varepsilon^2\big)
\quad\text{for}\quad
				\alpha=\partial_x\veq \nonumber \\
\Rightarrow\quad
&				\partial_xv
&&=
\alpha
&&-\varepsilon
\Big(
\mu'(\rho)\partial_x\rho\,\alpha
+\mu(\rho)\partial_x\alpha
\Big)
+\mathcal{O}\big(\varepsilon^2\big).\label{alpha-v}	
\end{alignat}
In particular, in all terms already multiplied by $\varepsilon$, we have the asymptotic relation~$	\alpha=\partial_xv+\mathcal{O}(\varepsilon)$ and $
\partial_x\alpha=\partial_x^2v+\mathcal{O}(\varepsilon)$. 
We use the identity~$
q(\rho)=B(\rho)+\mu(\rho)$,
which implies the equations~$\mu'(\rho)-q'(\rho)=-B'(\rho)$ and 
$
\mu(\rho)-q(\rho)=-B(\rho)
$. We apply  the second-order expansion~\eqref{alpha-v} in the leading-order term 	$\mu(\rho)\alpha$  to obtain
\begin{alignat*}{8}
& a'(h)P(\rho)\frac{h-\HAR(\rho)}{\varepsilon}
&&=
\mu(\rho)\partial_xv
+\varepsilon\mu(\rho)
\bigg[
&& 
\widetilde{C}(\rho)(\partial_xv)^2
+\mu'(\rho)\partial_x\rho\,\partial_xv
+\mu(\rho)\partial_x^2v\\
& && &&  \quad
-\Big(
q'(\rho)-\frac{\mu(\rho)}{\rho}
\Big)\partial_xv\,\partial_x\rho
-q(\rho)\partial_x^2v
\bigg]
&&
+\mathcal{O}\big(\varepsilon^2\big) \\
& && =
\mu(\rho)\partial_xv 
+\varepsilon\mu(\rho)
\bigg[
&&
\widetilde{C}(\rho)(\partial_xv)^2\\
& && &&   \quad
-\Big(
B'(\rho)-\frac{\mu(\rho)}{\rho}
\Big)
\partial_xv\,\partial_x\rho
-B(\rho)\partial_x^2v
\bigg]
&&
+\mathcal{O}\big(\varepsilon^2\big).
\end{alignat*}
Moreover, due to~Lemma~\ref{Oldpaper}, the left-hand side satisfies
				\begin{equation*}
					\partial_t v
					+\Big(v-a(h)P'(\rho)\rho\Big)\partial_xv
					=
					\partial_t v
					+\Big(
					v-a\big(\HAR(\rho)\big)P'(\rho)\rho
					\Big)\partial_xv
										-\varepsilon
					\phi(\rho)(\partial_xv)^2
					+\mathcal{O}\big(\varepsilon^2\big),
				\end{equation*}
				where 
$
				\phi(\rho)
				=
				a'\big(\HAR(\rho)\big)
				\HAR'(\rho)
				P'(\rho)\rho^2$ 
				 denotes an auxiliary function. 
Using the expression~$
				q(\rho)
				=
				a\big(\HAR(\rho)\big)P'(\rho)\rho+\mu(\rho),
				$ 
				we obtain
				\begin{align*}
				&	\partial_t v
					+\big(v-q(\rho)\big)\partial_xv
					=
					\varepsilon\mu(\rho)
					\left[
					C(\rho)(\partial_xv)^2
					-\left(
					B'(\rho)-\frac{\mu(\rho)}{\rho}
					\right)
					\partial_xv\,\partial_x\rho
					-B(\rho)\partial_x^2v
					\right]
					+\mathcal{O}\big(\varepsilon^2\big) \\
				&
				\text{with}\quad
					\frac{\phi(\rho)}{\mu(\rho)}
	=
	\rho\frac{P'(\rho)}{P(\rho)}
	\quad\text{and}\quad
				C(\rho)
=
\widetilde{C}(\rho)
+\rho\frac{P'(\rho)}{P(\rho)}.
				\end{align*}
The proof follows from the definition of $M(\rho)$, which yields the identity
		$$			-\left(
				B'(\rho)-\frac{\mu(\rho)}{\rho}
				\right)
				\partial_xv\,\partial_x\rho
				-B(\rho)\partial_x^2v
				=
				-\frac{1}{M(\rho)}
				\partial_x
				\big(
				M(\rho)B(\rho)\partial_xv
				\big).
$$
				\hfill
		\end{proof}
		
\medskip

First of all, we directly observe  from~Theorem~\ref{TheoremAwRascle2} that positive diffusion is added in the case~$\mu(\rho)<0$ 
which is in accordance with the discussions in Remark~\ref{RemarkDiffusion} and Subsection~\ref{SectionFrozen}. The following corollary summarizes this important statement. 

\medskip

\begin{corollary}[Stabilizing diffusive effect]\label{CorollaryStable}
If the velocity-gradient diffusion coefficient~$\mu(\rho)=
			a'\big(\HAR(\rho)\big)
			\HAR'(\rho) P(\rho) \rho
			<0$ is negative, i.e.~the inequality
 $  		\partial_\rho a\big(\HAR(\rho)\big)<0$   
            holds, both Theorem~\ref{TheoremAwRascle1} and Theorem~\ref{TheoremAwRascle2} guarantee positive diffusion that is added to the velocity in the small relaxation limit.
\end{corollary}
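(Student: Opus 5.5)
The corollary reduces to three checks: the sign equivalence for $\mu$, the sign of the diffusion in Theorem~\ref{TheoremAwRascle1}, and the sign of the diffusion in Theorem~\ref{TheoremAwRascle2}.

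\textbf{Sign equivalence.} By the chain rule, $\partial_\rho a\big(\HAR(\rho)\big)=a'\big(\HAR(\rho)\big)\HAR'(\rho)$. Hence
$$
\mu(\rho)=\partial_\rho a\big(\HAR(\rho)\big)\,P(\rho)\,\rho .
$$
For $\gamma>0$ and $\rho>0$ we have $P(\rho)\rho=\rho^{\gamma+1}/\gamma>0$. Therefore $\mu(\rho)<0$ holds exactly when $\partial_\rho a(\HAR(\rho))<0$.

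\textbf{Theorem~\ref{TheoremAwRascle1}.} The right-hand side is $\varepsilon\partial_x\big(\rho\mu(\rho)\partial_x\veq\big)$ and $\varepsilon\partial_x\big(z\mu(\rho)\partial_x\veq\big)$, so the relevant object is the induced equation for $\veq$. I would use the relation stated in Subsection~\ref{SectionFrozen}:
$$
v=\veq-\varepsilon\mu(\rho)\partial_x\veq+\mathcal{O}\big(\varepsilon^2\big).
$$
This shows that the velocity receives the correction $-\varepsilon\mu\,\partial_x\veq$. By Remark~\ref{RemarkDiffusion}, the diffusion matrix $D=\boldsymbol{\ell}\boldsymbol{r}^\T$ acts only on velocity gradients, since $\boldsymbol{r}^\T\partial_x\ueq=\rho\,\partial_x\veq$. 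Its nonzero eigenvalue is $-\mu(\rho)q(\rho)$. With $q(\rho)=-\rho V'(\rho)\ge0$ for a decreasing fundamental diagram, this eigenvalue is nonnegative when $\mu<0$, which gives parabolic (positive) diffusion acting on the velocity.

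\textbf{Theorem~\ref{TheoremAwRascle2}.} The second-derivative part of the right-hand side is
$$
-\varepsilon\mu(\rho)M^{-1}\partial_x\big(MB\,\partial_xv\big),
$$
whose principal part is $-\varepsilon\mu(\rho)B(\rho)\,\partial_x^2 v$. Here $B(\rho)=a\big(\HAR(\rho)\big)P'(\rho)\rho>0$, because $a>0$ (as in Remark~\ref{RemarkARZandLWR}) and $P'>0$. So the effective viscosity $-\mu B$ is positive precisely when $\mu<0$.

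The Hamilton--Jacobi term $C(\partial_xv)^2$ is first order and does not affect the parabolic sign. It therefore needs no sign condition.

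\textbf{Main obstacle.} The delicate step is the first theorem. There the diffusion is written on the conserved pair $(\rho,z)$ rather than on $v$, so the rank-one factorization from Remark~\ref{RemarkDiffusion} is needed to transfer the sign of $\mu$ to the velocity equation. The remaining checks are direct sign bookkeeping.
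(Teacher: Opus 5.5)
Your proposal is correct and follows the paper's own reasoning. For Theorem~\ref{TheoremAwRascle2}, the sign is read off from the diffusion term $-\varepsilon\mu M^{-1}\partial_x(MB\partial_x v)$ with $B>0$; for Theorem~\ref{TheoremAwRascle1}, it comes from the rank-one structure of $D$ and its nonzero eigenvalue $\lambda_2^{\textup{D}}=-\mu q$ in Remark~\ref{RemarkDiffusion}, together with $v=\veq-\varepsilon\mu\partial_x\veq$ from Subsection~\ref{SectionFrozen}. Your explicit assumption $q=-\rho V'(\rho)\ge 0$ is a hypothesis the corollary leaves unstated. It does not follow from $\mu<0$ alone, since $q=B+\mu$, and the paper also uses it implicitly when it identifies $\lambda_2^{\textup{D}}\ge 0$ with $\lambda_1^{\textup{eq}}\ge\lambda_1^{\textup{f}}$.
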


\medskip

		To gain further physical insight into the small relaxation limit, we decompose the right-hand side  of the local equilibrium approximation into a Hamilton-Jacobi and a  diffusion term. 
		More precisely, the diffusion is a pure dissipative mechanism that smooths small-scale perturbations without affecting wave propagation speeds. 
		
		In contrast, the term  $(\partial_x v)^2$  acts as a Hamilton-Jacobi correction that modifies the characteristic wave speeds. 
		Neglecting the parabolic diffusion term and writing for short~$
		\alpha = \partial_x \veq
		$ in equilibrium, i.e.~$v=\veq$, 
		the system reduces to the quasilinear first-order system 
		\begin{equation}
			\label{HJ1}
			\partial_{t}v + \Big(v - q(\rho)
			-\varepsilon
			\mu(\rho)
			C(\rho) \alpha
			\Big)\partial_{x}v
			=0. 
		\end{equation}
		Differentiating equation~\eqref{HJ1} with respect to the spatial variable~$x$, we obtain
\begin{equation}\label{HJ2}
	\begin{aligned}
		0&=	\partial_{t}\alpha + \Big(v - q(\rho)
		-\varepsilon
		\mu(\rho)
		C(\rho) \alpha
		\Big)\partial_{x}\alpha  \\
		&+
		\alpha \partial_x v
		-
		\alpha
		q'(\rho) \partial_x \rho
		-
		\varepsilon
		\Big(
		\partial_\rho\big(
		\mu(\rho) C(\rho)
		\big) 
		\alpha^2
		\partial_x \rho 
		+
		\mu(\rho) C(\rho) \alpha 
		\partial_x \alpha
		\Big).
	\end{aligned}
\end{equation} 
Introducing the state vector
		$
		\widetilde{\u}
		=(\rho,v,\alpha)^{\T}$, 
		equations~\eqref{HJ1} and~\eqref{HJ2} 
		can be written in quasilinear form as
		$
		\partial_t \widetilde{\u} + 
		A^{\textup{HJ}}
		\big(\widetilde{\u}\big)\partial_x \widetilde{\u}=0,
		$
		where the Jacobian
		$$
		A^{\textup{HJ}}
		\big(\widetilde{\u}\big)
		=
		\begin{pmatrix}
			v & \rho & 0
			\\
			0 &
			v-q(\rho)-\varepsilon\mu(\rho)C(\rho)\alpha
			&
			0
			\\
			-\alpha q'(\rho)-\varepsilon(\mu C)'(\rho)\alpha^2
			& \alpha
			&
			v-q(\rho)-2\varepsilon\mu(\rho)C(\rho)\alpha
		\end{pmatrix}
		$$
		has the characteristic speeds
		$$
		\lambda_1^{\textup{HJ}}(\rho,v)
		=
		v-q(\rho)-\varepsilon\mu(\rho)C(\rho)\alpha,
		\quad
		\lambda_2^{\textup{HJ}}(\rho,v)
		=
		v-q(\rho)-2\varepsilon\mu(\rho)C(\rho)\alpha, \quad
		\lambda_v(v)=v. 
		$$
The important message from these physical considerations in terms of the characteristic speeds of the Hamilton-Jacobi term, which is deduced in Theorem~\ref{TheoremAwRascle2}, is the property
\begin{equation}\label{ImportantCondition}
	\textup{sign}\Big\{
	\lambda_1^{\textup{HJ}}(\rho,v) 
	-
	\lambda_1^{\textup{eq}}(\rho,v)
	\Big\}
	=
	\textup{sign}\Big\{
\partial_x \veq
	\Big\}
	\quad
	\text{provided that}
	\quad
	\mu(\rho)C(\rho)<0
\end{equation}
holds, which is satisfied in the standard case~$\mu(\rho)<0$ and $C(\rho)>0$  that is ensured for small densities in the following Corollary, and provided that the velocity~$v=\veq+\mathcal{O}(\varepsilon)$ is sufficiently close to the equilibrium. 	
\medskip

\begin{corollary}[Hamilton-Jacobi correction in light traffic]\label{CorollaryCpos}
	Consider an equilibrium velocity 
$V(\rho)>0$ 
with~$
 V'(\rho)<0$ and finite vacuum limit~$ 
 \lim_{\rho\searrow 0^+}V(\rho)\le\omega_0.
$ 
Let $P(\rho)$ be any strictly increasing pressure law, 
let the function $\HAR(\rho)$ and the PIU closure $a(h)$ be related to the velocity $V(\rho)$ and the pressure law $P(\rho)$ through the equilibrium relation
$$
a\big(\HAR(\rho)\big)P(\rho)=\omega_0-V(\rho).
$$
Then, the inequality $C(\rho)>0$ holds for all sufficiently small densities
$\rho \in (0,\rho^*)$ with  some critical density~$\rho^*>0$  provided that the conditions
\begin{align}
&\lim\limits_{\rho\searrow 0^+}V(\rho)\leq\omega_0<\infty,
\quad
\lim\limits_{\rho\searrow 0^+}
a'\big(\HAR(\rho)\big) 
=
a'\big(H_0\big)
\neq 0, 
\quad
\lim\limits_{\rho\searrow 0^+}
\rho \HAR'(\rho) =0 \nonumber \\ 
& \text{and}\quad
\lim\limits_{\rho\searrow 0^+}
	\left[
\rho\frac{P'(\rho)}{P(\rho)}
+
\rho\frac{\HAR''(\rho)}{\HAR'(\rho)}
\right]>-2 \label{LemmaCpos1}
\end{align}
hold close to  vacuum states. 
Furthermore, we assume that the second derivative $a''$ exists and is continuous in a neighbourhood of $H_0$. 
In particular, the condition~\eqref{LemmaCpos1} reduces in the special case~\eqref{PIUconservative} to
$
\lim\limits_{\rho\searrow 0^+}
\rho
\HAR''(\rho)\HAR'(\rho)^{-1}
>-\gamma-2.
$
\end{corollary}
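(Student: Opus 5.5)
The approach is to write $C(\rho)$ as the sum of three pieces and take the limit $\rho\searrow 0^+$ of each. By the definitions in Theorem~\ref{TheoremAwRascle2},
\begin{equation*}
C(\rho)=2+\Big[\rho\frac{P'(\rho)}{P(\rho)}+\rho\frac{\HAR''(\rho)}{\HAR'(\rho)}\Big]+\rho\HAR'(\rho)\frac{a''\big(\HAR(\rho)\big)}{a'\big(\HAR(\rho)\big)}.
\end{equation*}
Condition~\eqref{LemmaCpos1} states that the limit of the bracket exceeds $-2$. Hence it suffices to show that the last term tends to zero as $\rho\searrow0^+$. Then $\lim_{\rho\searrow0^+}C(\rho)>0$ (or $C$ is bounded below by a positive constant near vacuum if only a $\liminf$ is available). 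By continuity of $C$ on $(0,\infty)$, which follows from $\HAR\in C^2$ and $a\in C^2$, positivity then holds on some interval $(0,\rho^*)$.

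To control the last term, I first show that $\HAR(\rho)$ converges to $H_0$ as $\rho\searrow0^+$. The assumption $\rho\HAR'(\rho)\to0$ gives $|\HAR'(\rho)|\le 1/\rho$ near zero, which only bounds $\HAR$ by $\ln$-growth. I would therefore not derive convergence from it alone. Instead I read the hypothesis $\lim a'(\HAR(\rho))=a'(H_0)$ as presupposing $\HAR(\rho)\to H_0$. Alternatively, this follows from the equilibrium relation $a(\HAR(\rho))=(\omega_0-V(\rho))/P(\rho)$ whenever the right-hand side has a finite limit and $a$ is locally invertible near $H_0$, which holds because $a'(H_0)\neq0$.

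Given $\HAR(\rho)\to H_0$, continuity of $a''$ near $H_0$ makes $a''(\HAR(\rho))$ bounded. Since $a'(H_0)\neq0$, $1/a'(\HAR(\rho))$ is bounded as well. Combined with $\rho\HAR'(\rho)\to0$, the third term vanishes in the limit. The assumptions that $V'<0$ and $P$ is strictly increasing ensure that $\HAR'\neq0$ generically, so the quotient $\HAR''/\HAR'$ is well defined. I expect the main obstacle to be justifying this well-definedness and the convergence $\HAR(\rho)\to H_0$, that is, the second step. I would handle it by making these implicit assumptions explicit.

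For the special case~\eqref{PIUconservative}, with $P(\rho)=\rho^\gamma/\gamma$, we have $\rho P'(\rho)/P(\rho)=\gamma$ identically. Condition~\eqref{LemmaCpos1} then becomes $\lim\rho\HAR''(\rho)/\HAR'(\rho)>-2-\gamma$, as claimed.
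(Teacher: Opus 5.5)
Your proposal is correct and follows the paper's own argument: isolate the term $\rho\HAR'(\rho)\,a''(\HAR(\rho))/a'(\HAR(\rho))$, show it is $o(1)$ as $\rho\searrow0^+$, and conclude from \eqref{LemmaCpos1} that $C(\rho)\to 2+\lim[\cdots]>0$. Your explicit use of $\HAR(\rho)\to H_0$ to keep $a''(\HAR(\rho))$ bounded spells out what the paper compresses into ``the assumptions yield''. One minor point: you do not need continuity of $C$ on $(0,\infty)$, and it is false in general, since $C$ has poles where $\HAR'$ or $a'(\HAR)$ vanishes (as in the MATTEO example); a positive limit at $0^+$ already gives $C>0$ on some $(0,\rho^*)$.
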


\begin{proof}
The assumptions yield
$$
\rho \HAR'(\rho)
\frac{a''\big(\HAR(\rho)\big)}{a'\big(\HAR(\rho)\big)}
\rightarrow 0
\quad
\Rightarrow
\quad
C(\rho)
=
2+
\rho\frac{P'(\rho)}{P(\rho)}
+
\rho\frac{\HAR''(\rho)}{\HAR'(\rho)}
+{o}(1)
\quad
\text{for}
\quad
\rho \searrow 0^+.
$$
Therefore the term $C(\rho)>0$ is strictly positive for all sufficiently small densities $\rho>0$ provided that the inequality~\eqref{LemmaCpos1} holds.

\hfill
\end{proof}

\section{Calibration of equilibria.}
\label{SectionCalibration}
In this section we state explicit choices of the equilibrium functions such that the important condition~\eqref{ImportantCondition}, i.e.~$\mu(\rho)C(\rho)<0$, is satisfied. 
Its significance is explained by   the propagation of traffic waves in the Hamilton--Jacobi limit. More precisely,  the finite relaxation time of the drivers introduces a correction to the characteristic speed, namely
$\big(\lambda_1^{\textup{HJ}}-\lambda_1^{\textup{eq}}\big)
(\rho,v)
=
-\varepsilon\mu(\rho)C(\rho)\partial_x\veq
$ 
for~$(v-\veq)=\mathcal{O}(\varepsilon)$. 
Since the modification of the wave speed induced by finite relaxation is determined by the spatial variation of the velocity, we have the implications
$$
	\begin{aligned}
		\partial_x \veq > 0
		&\quad\Rightarrow\quad
		\lambda_1^{\textup{HJ}}(\rho,v)
		\geq
		\lambda_1^{\textup{eq}}(\rho,v),
		\\
		\partial_x \veq < 0
		&\quad\Rightarrow\quad
		\lambda_1^{\textup{HJ}}(\rho,v)
		\leq
		\lambda_1^{\textup{eq}}(\rho,v).
	\end{aligned}
$$
Hence, the propagation speed of traffic flows is always logically coupled to the current acceleration and braking behavior of the simulated vehicles. The following examples ensure  consistency with typically used fundamental diagrams, while simultaneously providing the desired stabilizing effect on the velocity that is guaranteed according to Corollary~\ref{CorollaryStable} in the case of light traffic.

\medskip

\begin{example}[Power-law closure]
Consider the traffic pressure law~$
		P(\rho)=\nicefrac{\rho^\gamma}{\gamma} 
$ 
and	let the closure $a\in C^2$ be strictly monotone on the relevant range such that its inverse~$a^{-1}$ is well-defined. Furthermore,  we define
$$
		R(\rho)=a\big(\HAR(\rho)\big)
		=R_0-k\Big(\frac{\rho}{\rho_{\max}}\Big)^m
		\quad
		\text{for}
		\quad
		\quad k>0,\quad m>0.
$$
	The corresponding equilibrium reads as
$$
			V(\rho)
			=
\omega_0-a\big(\HAR(\rho)\big)P(\rho)
			=
			\omega_0
			-
			\left[
			R_0-k\Big(\frac{\rho}{\rho_{\max}}\Big)^m
			\right]
			\frac{\rho^\gamma}{\gamma}.
$$
The term~$C(\rho)>0$ is strictly positive for all admissible densities $\rho\in(0,\rho_{\max}]$ as it holds
$$
		\rho\frac{P'(\rho)}{P(\rho)}=\gamma,
		\quad
		\rho\frac{R''(\rho)}{R'(\rho)}
		=m-1
\quad
\Rightarrow
\quad
	C(\rho)=\gamma+m+1>0. 
$$ 
In the case~$\gamma>0$ the coefficient~$\mu(\rho)<0$ is negative, 
as it holds~$	\mu(\rho)
=
R'(\rho)P(\rho)\rho 
$. Hence, the diffusion has a stabilizing effect, which is proven in Theorems~\ref{TheoremAwRascle1} and \ref{TheoremAwRascle2}.

\end{example}

	\medskip
	
\begin{example}[Pipes-Munjal closure]
We consider the Pipes-Munjal velocity
\begin{equation*}
	V(\rho)
	=
	\omega_0
	\left(
	1-\Big(\frac{\rho}{\rho_{\max}}\Big)^n
	\right)
\quad\text{for}\quad
\gamma\neq n>0
\quad\text{and}\quad
P(\rho)=\frac{\rho^\gamma}{\gamma}.
\end{equation*}
The relation
$	V(\rho)=\omega_0-a\big(\HAR(\rho)\big)P(\rho)
$ yields the equalities
\begin{equation*}
	R(\rho)=a\big(\HAR(\rho)\big)
	=
	\frac{\gamma \omega_0}{\rho_{\max}^n}
	\rho^{n-\gamma},
	\quad
		\frac{R''(\rho)}{R'(\rho)}
	=
	\frac{n-\gamma-1}{\rho}, 
	\quad
	\rho\frac{P'(\rho)}{P(\rho)}=\gamma. 
\end{equation*}
Therefore, the desired property~$C(\rho)>0$ holds for all admissible densities $\rho\in(0,\rho_{\max}]$, as it holds $C(\rho)=n+1$.
Due to the relation~$	\mu(\rho)
	=
	R'(\rho)P(\rho)\rho 
$ we have a negative coefficient, i.e.~$\mu(\rho)<0$ if and only if additionally the ordering $n<\gamma$ holds. In particular, for $n=1$ we have the Greenshields velocity law.
\end{example}

\medskip

\begin{example}[Kinetic equilibrium velocity]
Consider the equilibrium speed arising from the kinetic traffic model
of Tosin and Zanella~\cite{TosinZanella2019}. In normalized variables, the microscopic
acceleration probability is
$
\Pi(\rho)=(1-\rho)^\nu,
$ 
and the asymptotic mean velocity of the kinetic model is
\[
V_\infty(\rho)
=
\frac{\Pi(\rho)}
{\Pi(\rho)+\big(1-\Pi(\rho)\big)^2}.
\]
For the particular choice \(\nu=1\), and after rescaling the free-flow
velocity by \(\omega_0>0\), this yields
\[
V(\rho)
=
\omega_0
\frac{1-\rho}{1-\rho+\rho^2},
\qquad
\rho\in[0,1].
\]
In particular, the properties 
$
V(0)=\omega_0$ and  
$ 
V(1)=0
$ hold. 
Let the PIU pressure law be~$
P(\rho)=\nicefrac{\rho^3}{3}.
$
The equilibrium relation
yields \[
R(\rho)=a\big(\HAR(\rho)\big)
=
\frac{3\omega_0}
{\rho(1-\rho+\rho^2)}
\quad
\Rightarrow
\quad
\mu(\rho)
=
\rho P(\rho)R'(\rho)
=
-\omega_0
\frac{
\rho^2(3\rho^2-2\rho+1)
}{
(1-\rho+\rho^2)^2
}<0
\]
for every density \(\rho\in(0,1]\). 
Moreover, the term
$
C(\rho)
$
has the explicit form
\[
C(\rho)
=
3
\frac{
\rho^4-3\rho^3+6\rho^2-3\rho+1
}{
(\rho^2-\rho+1)(3\rho^2-2\rho+1)
}.
\]
All factors in the denominator are strictly positive and the numerator
is positive for all densities. Hence, we have the desired property 
$
C(\rho)>0$ and 
$\mu(\rho)C(\rho)<0
$ on the whole density interval. 
\end{example}

\medskip

\begin{example}[Newell-Franklin closure]
    Consider the Newell--Franklin equilibrium velocity function
	\[
	V(\rho)
	=
	\omega_0
	\left[
	1-
	\exp\bigg(
	-\beta\Big(\frac{\rho_{\max}}{\rho}-1\Big)
	\bigg)
	\right]
	\quad
    \text{with}
    \quad
	\beta>0
	\]
	and the pressure law
	$
	P(\rho)=\frac{\rho^\gamma}{\gamma},
	\
	\gamma>0.
	$
	The equilibrium relation
	$
	V(\rho)=\omega_0-a\big(\HAR(\rho)\big)P(\rho)
	$
yields
\begin{align*}
	R(\rho)&=a\big(\HAR(\rho)\big)
	=
	\gamma\omega_0
	\rho^{-\gamma}
	\exp\bigg(
	-\beta\Big(\frac{\rho_{\max}}{\rho}-1\Big)
	\bigg)
    \quad
    \Rightarrow
    \quad
	\frac{R'(\rho)}{R(\rho)}
	=
	\frac{\beta\rho_{\max}-\gamma\rho}{\rho^2},\\
    	\mu(\rho)
&	=
	\rho P(\rho)R'(\rho)
	=
	\omega_0
	\exp\bigg(
	-\beta\Big(\frac{\rho_{\max}}{\rho}-1\Big)
	\bigg)
	\frac{\beta\rho_{\max}-\gamma\rho}{\rho}.
\end{align*}
	Therefore, we obtain the two regimes
	\[
    \begin{cases}
	\mu(\rho)>0
	&\text{for}\quad
	0<\rho<\frac{\beta}{\gamma}\rho_{\max},
	\\
	\mu(\rho)<0
	&\text{for}\quad
	\rho>\frac{\beta}{\gamma}\rho_{\max}.
	\end{cases}
    \]
	On every interval  where it holds \(R'(\rho)\neq0\), we further obtain
\begin{align*}    
	C(\rho)
&	=
	2+
	\rho\frac{R''(\rho)}{R'(\rho)}
	+
	\rho\frac{P'(\rho)}{P(\rho)}
	=
	\frac{\beta\rho_{\max}}{\rho}
	-
	\frac{\gamma\rho}
	{\beta\rho_{\max}-\gamma\rho}, \\
    	\mu(\rho)C(\rho)
&	=
	\omega_0
	\exp\bigg(
	-\beta\Big(\frac{\rho_{\max}}{\rho}-1\Big)
	\bigg)
	\left[
	\frac{\beta\rho_{\max}
		\bigl(\beta\rho_{\max}-\gamma\rho\bigr)}
	{\rho^2}
	-\gamma
	\right].
\end{align*}
Thus, the inequality 
$	
	\mu(\rho)C(\rho)<0
$ 
holds if and only if	
$    
	\gamma\rho^2
	+
	\beta\gamma\rho_{\max}\rho
	-
	\beta^2\rho_{\max}^2
	>0.$ 
    Equivalently,
	$
	\mu(\rho)C(\rho)<0
	\ \text{for}\
	\rho>\rho_c,
	$
	where the critical density reads as
	\[
	\rho_c
	=
	\frac{\beta\rho_{\max}}{2}
	\left(
	\sqrt{1+\frac{4}{\gamma}}-1
	\right).
	\]
	Hence, provided that \(\rho_c<\rho_{\max}\) holds, the
	Newell--Franklin fundamental diagram satisfies the desired
	Hamilton--Jacobi sign condition for all densities
	$
	\rho\in(\rho_c,\rho_{\max}],
	$
	apart from the degenerate point
	\(\rho=\beta\rho_{\max}/\gamma\), where \(R'(\rho)=0\) and the
	representation of \(C(\rho)\) in terms of \(R''/R'\) is singular. 
    However, we have the undesirable property~$\mu(\rho)>0$ for light traffic, which motivates the MATTEO velocity in the following example.
\end{example}

\medskip

\medskip

\begin{example}[MATTEO closure] 
Finally, we state an equilibrium velocity that is specifically tailored to the PIU model. More precisely, Corollary~\ref{CorollaryCpos} states positivity of $C(\rho)$ only for small densities and not for all, which is the case in the previous examples. This weaker statement allows to construct velocities that can introduce artificial instabilities to model stop-and-go waves in heavy traffic. In particular, the PIU velocity presented here is inspired by a modified  Newell--Franklin model. 	
	We assume that the density~$\rho\in[0,1]$ is normalized 
	and we consider the equilibrium velocity
\begin{equation}\label{ConditionsMATTEO}	
		V(\rho)
=
\omega_0
\left[
1-
\rho^\beta
\exp\Big(
\alpha\frac{\rho-1}{\rho+\lambda}
\Big)
\right]
\quad
\text{with}
\quad
		\lambda> 0,
\quad
0<\beta<\gamma,
\quad
\frac{\alpha}{1+\lambda}
>
\gamma-\beta
\end{equation}
and positive constants $\alpha\coloneqq \nicefrac{|w|}{\omega_0}>0$, where 
the parameter $ w<0$  denotes the propagation speed of the backward-traveling  congestion wave. It holds
$$
\textup{sign}\Big\{\mu(\rho)\Big\}
=
\textup{sign}
\left\{
\beta
+
\frac{\alpha(1+\lambda)\rho}{(\rho+\lambda)^2}
-
\gamma
\right\}. 
$$ 
Under the conditions~\eqref{ConditionsMATTEO} there exists exactly one solution $\rho^*\in (0,\lambda)$ where the term~$\mu(\rho)$  changes sign. 
In the special case $\lambda\geq 1$ it holds~$
C(\rho)>0$ for all large densities 
$\rho\in(\rho^*,1).
$

Since the definition of $C(\rho)$ contains the divisor~$a'\big(\HAR(\rho)\big)$, 
which vanishes for~$\mu(\rho^*)=0$,
the function~$C(\rho)$ contains a pole at~$\rho^*$. 
The singularity of $C(\rho)$ at $\rho^*$ is only due to
the representation involving the quotient $R''(\rho)/R'(\rho)$ and
does not induce a singularity in the Hamilton--Jacobi coefficient
$\mu(\rho)C(\rho)$. Indeed, we have
\begin{align*}
    &\mu(\rho)=\rho P(\rho)R'(\rho),
\quad
C(\rho)
=
2+\rho\frac{R''(\rho)}{R'(\rho)}
+\rho\frac{P'(\rho)}{P(\rho)}, \\
&\Rightarrow\quad
\mu(\rho)C(\rho)
=
2\rho P(\rho)R'(\rho)
+\rho^2P(\rho)R''(\rho)
+\rho^2P'(\rho)R'(\rho).
\end{align*}
Hence, the product $\mu(\rho)C(\rho)$ admits a finite continuation
through $\rho^*$, namely
\begin{equation*}    
\bigl(\mu C\bigr)(\rho^*)
=
(\rho^*)^2P(\rho^*)R''(\rho^*)
\end{equation*}
provided that~$R''(\rho)$ is continuous. 
For the present closure this quantity is strictly positive.
Thus,  the critical value $\rho^*$ is not a singular point of the Hamilton-Jacobi
correction $\big(\mu C\big)(\rho)$ itself. 
Table~\ref{TableExampleI} illustrates these findings for the case~$\lambda\geq 1$ and 
Table~\ref{TableExampleII} for~$\lambda< 1$, respectively.

\begin{table}[h]
	\centering
	\caption{Sign structure of the velocity-gradient diffusion coefficient
		$\mu(\rho)$ and correction $C(\rho)$ for the parameters
		$\beta=0.80$, $\gamma=2.15$, $\lambda=1.00$,
		$\alpha=3.0375$. The critical density at which
		$\mu(\rho)$ changes sign is
		$\rho^*=0.50$,
		while the zero of $C(\rho)$ below $\rho^*$ is
		$\rho_C^-\approx0.35$.}
    	\begin{tabular}{lccccc}
		\hline
		Traffic regime & Density range & $\mu(\rho)$ & $C(\rho)$
		& $\mu(\rho)C(\rho)$ & acceleration \\
		\hline
		
		Light traffic
		& $0<\rho<\rho_C^-$
		& $-$ & $+$ & $-$
		& $ \lambda_1^{\textup{HJ}}
		\geq
		\lambda_1^{\textup{eq}}$
		\\[2pt]
		
		Intermediate regime
		& $\rho_C^-<\rho<\rho^*$
		& $-$ & $-$ & $+$
		& $ \lambda_1^{\textup{HJ}}
		\leq
		\lambda_1^{\textup{eq}}$
		\\[2pt]
		
		Stop-and-go 
		& $\rho^*<\rho<1$
		& $+$ & $+$ & $+$
		& $ \lambda_1^{\textup{HJ}}
		\leq
		\lambda_1^{\textup{eq}}$
		\\
		\hline
	\end{tabular}
	
	\vspace{2mm}
	\begin{minipage}{0.9\textwidth}
		\footnotesize
		The critical density $\rho^*$ is a pole of  $C(\rho)$, which is caused by
		the property $R'(\rho^*)=0$. There is no second zero of the function $C(\rho)$ in the
		heavy traffic density interval $(\rho^*,1)$. 
		The last column states the ordering of characteristic speeds
		under the acceleration condition
		$\partial_x v^{\mathrm{eq}}>0$.
	\end{minipage}
	\label{TableExampleI}
\end{table}

\begin{table}[h]
	\centering
	\caption{Sign structure of the velocity-gradient diffusion coefficient
		$\mu(\rho)$ and correction $C(\rho)$ for the parameters
		$\beta=0.50$, $\gamma=1.50$, $\lambda=0.50$,
		$\alpha=1.515$. The critical density at which
		$\mu(\rho)$ changes sign is
		$\rho^*\approx0.243$.
		The function $C(\rho)$ has two zeros,
		$\rho_C^-\approx0.152$ and
		$\rho_C^+\approx0.762$, with
		$\rho_C^-<\rho^*<\rho_C^+$.}
	
	\begin{tabular}{lccccc}
		\hline
		Traffic regime & Density range & $\mu(\rho)$ & $C(\rho)$
		& $\mu(\rho)C(\rho)$ & acceleration \\
		\hline
		
		Light traffic
		& $0<\rho<\rho_C^-$
		& $-$ & $+$ & $-$
		& $\lambda_1^{\textup{HJ}}
		\geq
		\lambda_1^{\textup{eq}}$
		\\[2pt]
		
		Intermediate regime 
		& $\rho_C^-<\rho<\rho^*$
		& $-$ & $-$ & $+$
		& $\lambda_1^{\textup{HJ}}
		\leq
		\lambda_1^{\textup{eq}}$
		\\[2pt]
		
		\multirow{2}{*}{Stop-and-go}
		& $\rho^*<\rho<\rho_C^+$
		& $+$ & $-$ & $-$
		& $\lambda_1^{\textup{HJ}}
		\geq
		\lambda_1^{\textup{eq}}$
		\\[2pt]
		
		& $\rho_C^+<\rho<1$
		& $+$ & $-$ & $-$
		& $\lambda_1^{\textup{HJ}}
		\geq
		\lambda_1^{\textup{eq}}$
		\\
		\hline
	\end{tabular}
	
	\vspace{2mm}
	\begin{minipage}{0.9\textwidth}
		\footnotesize
		The critical density $\rho^*$ is a pole of $C(\rho)$, which is caused by
		the property $R'(\rho^*)=0$. In contrast to the case
		$\lambda\geq1$ in Table~\ref{TableExampleI}, the present choice $\lambda=0.50<1$ yields a second zero
		of $C(\rho)$ in the heavy-traffic interval, namely
		$\rho_C^+\approx0.762>\rho^*\approx0.243$.
		The last column states the ordering of characteristic speeds
		under the acceleration condition
		$\partial_x v^{\mathrm{eq}}>0$.
	\end{minipage}
	\label{TableExampleII}
\end{table}

The sign structure of $\mu(\rho)$ and $C(\rho)$ plays an important role in the emergence and propagation of traffic waves. For regimes with low densities when it holds~$\mu(\rho)<0$  the diffusive relaxation is dissipative and damps small-scale perturbations, hence   suppressing the formation of stop-and-go waves. As the density increases, the parameter $\mu(\rho)$ changes sign at the critical density $\rho^*$ and becomes positive in the congested regime. The stabilizing effect of diffusion is then lost, allowing perturbations to grow and giving rise to stop-and-go instabilities. In addition, the sign change of $\mu(\rho)C(\rho)$ modifies the Hamilton--Jacobi correction to the characteristic wave speeds. Thus, the transition from $\mu C<0$ to $\mu C>0$ marks a qualitative change in the propagation of traffic disturbances and is consistent with the distinct wave behavior observed in the low- and high-density regimes. 

To illustrate the effect of the Hamilton--Jacobi correction, consider
first the light-traffic regime, where it holds $\mu(\rho)C(\rho)<0$. The identity 
$
\lambda_1^{\textup{HJ}}-\lambda_1^{\textup{eq}}
=
-\varepsilon\mu(\rho)C(\rho)\partial_x \veq,
$ yields the implications
$$
\partial_x \veq>0
\quad\Rightarrow\quad
\lambda_1^{\textup{HJ}}
\geq
\lambda_1^{\textup{eq}}
\qquad\quad
\text{and}
\qquad\quad
\partial_x \veq<0
\quad\Rightarrow\quad
\lambda_1^{\textup{HJ}}
\leq
\lambda_1^{\textup{eq}}.
$$
Thus, in light traffic, finite relaxation shifts the first
characteristic speed towards larger values during acceleration and
towards smaller values during braking. 
In the heavy-traffic regime,  the ordering 
 might be reversed. 
Hence, this closure produces a density-dependent reversal of the
finite-relaxation correction to the propagation of traffic
disturbances.

To emphasize the specific structure and physical interpretation of the
proposed closure, we refer to it as the \emph{MATTEO closure}, an acronym
for \emph{Modified Acceleration-based Traffic Transition from Equilibrium
to Oscillations}. The term \emph{Modified} reflects the modification of a
Newell-Franklin-type equilibrium velocity law, while
\emph{Acceleration-based} refers to the driver-dependent acceleration and
relaxation mechanism underlying the PIU formulation. The term
\emph{Transition} emphasizes the density-dependent change in the sign of
the relaxation coefficient \(\mu(\rho)\), which separates a dissipative
low-density regime from a potentially unstable congested regime.
Finally, \emph{Equilibrium} refers to the prescribed speed-density
relation \(V(\rho)\), whereas \emph{Oscillations} highlights the possible
onset of stop-and-go dynamics when the stabilizing diffusive mechanism is
lost. 
The PIU system endowed with the MATTEO closure defines the \emph{MATTEO PIU} traffic flow model.

\end{example}

\medskip

\begin{remark}[Kinetic traffic models]
The density-dependent change of stability exhibited by the previous
example is consistent with mechanisms arising in kinetic traffic
models. In particular, in \cite{Herty:2020}, the
equilibrium speed is obtained as the first moment of a kinetic
equilibrium distribution and exhibits a transition between a free-flow
and a congested regime. The corresponding Chapman-Enskog analysis
identifies density intervals in which the effective diffusion changes
its stability properties, providing a mechanism for the formation of
bounded stop-and-go waves.

The kinetic equilibrium velocity is not globally smooth at the
transition density and therefore does not directly satisfy the
regularity assumptions of Theorem~\ref{TheoremAwRascle2}. Nevertheless, its qualitative
behaviour motivates the construction above, namely,  the sign change of
the relaxation correction with increasing density. The MATTEO closure therefore provides, within the present smooth macroscopic framework, an analogue of the density-dependent stability transition generated by the kinetic model.
\end{remark}

\medskip

Summarizing, the three functions~$V(\rho)$, $P(\rho)$, $R(\rho)$ must be specified. Due to the Relation $R(\rho) P(\rho) = \omega_0-V(\rho)$ it is sufficient to specify two of them. 
These examples state appropriate candidate functions that can be calibrated by a fundamental diagram to real-world data, which yields a calibration to a first-order LWR model. In particular, the equilibrium velocity $V(\rho)$ and equilibrium acceleration parameter~$R(\rho)$ are physically meaningful quantities, while the traffic pressure is implicitly characterized by them. 

The term~$\HAR(\rho)$ describes the equilibrium driver response at a given traffic density, representing the average acceleration or reaction behavior of drivers as a dimensionless quantity.
The function $a(h)$, in turn, translates this dimensionless driver-related state into the corresponding effective velocity-related quantity, which  enters the traffic pressure term. 
In equilibrium only the composition $a\big(
\HAR(\rho)
\big)$ and not the individual functions are of interest. Furthermore, the third equations in the PIU model~\eqref{PiuS1} also satisfies the material derivative~$\partial_t a(h) +  v\partial_x a(h) =0$ in terms of all choices~$a()$. Therefore, it makes sense to use the  simplest choice, namely~$
a(h)=
\alpha_0 h,
$ 
where the constant~$\alpha_0$ denotes a reference speed, while the driver-related state variable  $h$ is  dimensionless.

\section{Illustration of the theoretical results.}
\label{SectionNumerics}
In this section we show the solution to a Riemann problem with constant left~$\u_\ell$
and right~$\u_r$
state as  explained in Theorem~\ref{TheoremRP}. 
The left panel of Figure~\ref{Figure1} shows the solution to the Riemann problem with initial values
$
\rho_\ell
=0.8
$, 
$
\rho_r
=0.1
$, 
$
v_\ell
=0.2
$, 
$
v_r
=0.5
$,~$h_\ell=\rho_\ell^{\nicefrac{3}{2}}$,~$h_r=\rho_r^{\nicefrac{3}{2}}$ and initial jump at~$x_0=0$.

\begin{figure}[H]

\begin{minipage}{0.49\textwidth}
	
	\begin{center}	
		 \textbf{(i) \ Rarefaction wave} 
		\vspace{-1mm}			
		\scalebox{1}{\includegraphics[width=\linewidth]{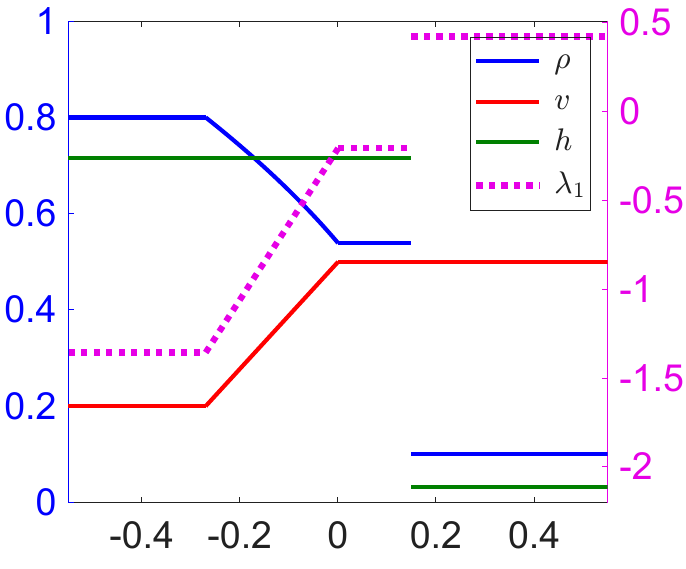}}

	\end{center}	
\end{minipage}
\hfil
\begin{minipage}{0.49\textwidth}
	
	\begin{center}
		 \textbf{(ii) \ Shock wave} 
\vspace{-1mm}			
		\scalebox{1}{\includegraphics[width=\linewidth]{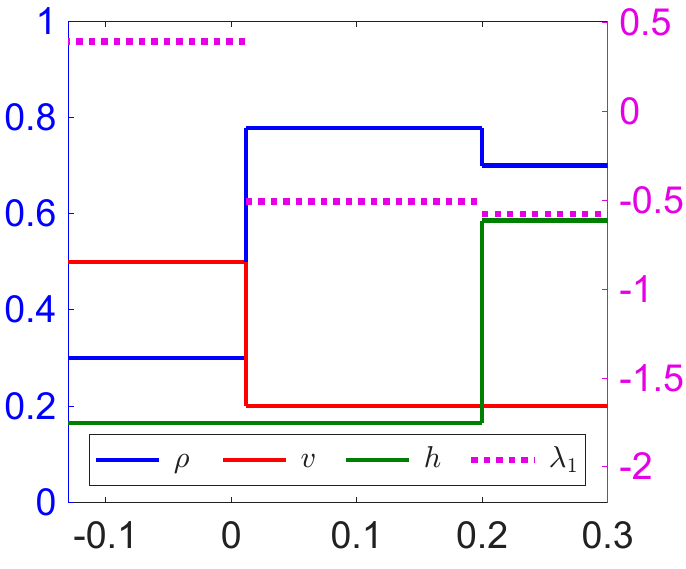}}	
		
	\end{center}
\end{minipage}

\caption{Solution to the Riemann problem of the PIU model~\eqref{PIUconservative} for~$\gamma = 2$ and a(h)=1+h. The Left panel (i) states the solution to the Riemann problem at time $t=0.3$. Panel (ii) states the shock case at time $t=1$.}

\label{Figure1}	
\end{figure}

It states the density~$\rho$ (blue), velocity~$v$ (red) and the driver-dependent quantity~$h$ (green) with respect to the left $y$-axis. 
The characteristic speed~$\lambda_1(\u)$ is plotted with respect to the scale of the right $y$-axis. 
Since it is  increasing, characteristics do not cross and a rarefaction wave occurs and not a shock.

If the Rankine-Hugoniot condition~$
f(\u_\ell) - f(\bar{\u})
=s(\u_\ell - \bar{\u})
$ holds, the left state is connected to the intermediate state by a shock wave with speed~$s>0$. Note that in this case the intermediate states~$\bar{v}$ and~$\bar{h}$  can be obtained as in the previous discussion for the rarefaction wave, namely by making use of the Riemann invariants and the linear degeneracy of the third field. 
The right panel of Figure~\ref{Figure1} shows the solution with 
initial values 
$
\rho_\ell
=0.3
$, 
$
\rho_r
=0.7
$, 
$
v_\ell
=0.5
$, 
$
v_r
=0.2
$ and~$h_\ell=\rho_\ell^{\nicefrac{3}{2}}$, $h_r=\rho_r^{\nicefrac{3}{2}}$. 
Here, characteristics corresponding to the first field with eigenvalues~$\lambda_1(\u)$ cross and a shock occurs, which is followed by a contact corresponding to the second and third field.

\section*{Conclusion} 
The proposed framework provides a way to incorporate driver-dependent effects directly into a macroscopic traffic flow model. The analysis shows that introducing an additional dynamic variable leads to a richer characteristic and stability structure than in standard second-order models. In particular, the second-order Chapman--Enskog expansion reveals higher-order effects that are not visible at the leading diffusive level. The resulting stability conditions highlight the role of the underlying constitutive relations and provide a basis for further investigating the influence of driver heterogeneity in traffic dynamics.

		\bibliographystyle{plain}
		\bibliography{trafficbib}

\begin{thebibliography}{10}

\bibitem{Albeaik:2022}
S.~Albeaik, A.~Bayen, M.~T. Chiri, X.~Gong, A.~Hayat, N.~Kardous, A.~Keimer,
  S.~T. McQuade, B.~Piccoli, and Y.~You.
\newblock Limitations and improvements of the {I}ntelligent {D}river {M}odel
  ({IDM}).
\newblock {\em SIAM Journal on Applied Dynamical Systems}, 21(3):1862--1892,
  2022.

\bibitem{Aw:2002}
A.~Aw, A.~Klar, M.~Rascle, and T.~Materne.
\newblock Derivation of continuum traffic flow models from microscopic
  follow-the-leader models.
\newblock {\em SIAM Journal on Applied Mathematics}, 63(1):259--278, 2002.

\bibitem{Aw:2000}
A.~Aw and M.~Rascle.
\newblock Resurrection of ``second order'' models of traffic flow.
\newblock {\em SIAM Journal on Applied Mathematics}, 60(3):916--938, 2000.

\bibitem{Bagnerini:2003}
P.~Bagnerini and M.~Rascle.
\newblock A multiclass homogenized hyperbolic model of traffic flow.
\newblock {\em SIAM Journal on Mathematical Analysis}, 35(4):949--973, 2003.

\bibitem{Borsche:2018}
R.~Borsche and A.~Klar.
\newblock A nonlinear discrete velocity relaxation model for traffic flow.
\newblock {\em SIAM Journal on Applied Mathematics}, 78(5):2891--2917, 2018.

\bibitem{BRESSAN}
A.~Bressan.
\newblock {\em Hyperbolic systems of conservation laws: The one dimensional
  {C}auchy problem}.
\newblock Oxford {L}ecture {S}eries in {M}athematics and its {A}pplications.
  Oxford University Press, New York, 2005.

\bibitem{Burger:2018}
M.~Burger, S.~G\"{o}ttlich, and T.~Jung.
\newblock Derivation of a first order traffic flow model of
  {L}ighthill-{W}hitham-{R}ichards type.
\newblock {\em IFAC-PapersOnLine}, 51(9):49--54, 2018.

\bibitem{Cardaliaguet:2021}
P.~Cardaliaguet and N.~Forcadel.
\newblock From heterogeneous microscopic traffic flow models to macroscopic
  models.
\newblock {\em SIAM Journal on Mathematical Analysis}, 53(1):309--322, 2021.

\bibitem{ChenLevermoreLiu1994}
G.-Q. Chen, C.D. Levermore, and T.-P. Liu.
\newblock Hyperbolic conservation laws with stiff relaxation terms and entropy.
\newblock {\em Communications on Pure and Applied Mathematics}, 47(6):787--830,
  1994.

\bibitem{Cristiani:2016}
E.~Cristiani and S.~Sahu.
\newblock On the micro-to-macro limit for first-order traffic flow models on
  networks.
\newblock {\em Networks and Heterogeneous Media}, 11(3):395--413, 2016.

\bibitem{Dimarco2019}
Giacomo Dimarco and Andrea Tosin.
\newblock The {A}w-{R}ascle traffic model: Enskog-type kinetic derivation and
  generalisations.
\newblock {\em Journal of Statistical Physics}, 178:178--210, 2019.

\bibitem{Dimarco2021}
Giacomo Dimarco, Andrea Tosin, and Mattia Zanella.
\newblock Kinetic derivation of {A}w-{R}ascle-{Z}hang-type traffic models with
  driver-assist vehicles.
\newblock {\em Journal of Statistical Physics}, 186(17):1572--9613, 2021.

\bibitem{Gazis:1961}
D.~C. Gazis, R.~Herman, and R.~W. Rothery.
\newblock Nonlinear follow-the-leader models of traffic flow.
\newblock {\em Operations Research}, 9(4):545--567, 1961.

\bibitem{Ostia26}
Stephan Gerster and Giuseppe Visconti.
\newblock Relaxation and stability analysis of a third-order multiclass traffic
  flow model.
\newblock {\em Communications in Mathematical Sciences}, 24(4):1031--1051,
  2026.

\bibitem{Gong:2023}
X.~Gong and A.~Keimer.
\newblock On the well-posedness of the ``{B}ando-follow the leader'' car
  following model and a time-delayed version.
\newblock {\em Networks and Heterogeneous Media}, 18(2):775--798, 2023.

\bibitem{Hayat:2023}
A.~Hayat, B.~Piccoli, and S.~Truong.
\newblock Dissipation of traffic jams using a single autonomous vehicle on a
  ring road.
\newblock {\em SIAM Journal on Applied Mathematics}, 83(3):909--937, 2023.

\bibitem{Herty:2020}
M.~Herty, G.~Puppo, S.~Roncoroni, and G.~Visconti.
\newblock The {BGK} approximation of kinetic models for traffic.
\newblock {\em Kinetic and Related Models}, 13(2):279--307, 2020.

\bibitem{Iannini:2016}
M.~L.~L. Iannini and R.~Dickman.
\newblock Kinetic theory of vehicular traffic.
\newblock {\em American Journal of Physics}, 84(2):135--145, 2016.

\bibitem{Lighthill:1955}
M.~J. Lighthill and G.~B. Whitham.
\newblock On kinematic waves. {II}. {A} theory of traffic flow on long crowded
  roads.
\newblock {\em Proceedings of the Royal Society of London. Series A},
  229:317--345, 1955.

\bibitem{Lu:2025}
S.~Lu.
\newblock Modeling dynamics of traffic flow, information creation and spread
  through vehicle-to-vehicle communications: {A} kinetic approach.
\newblock {\em International Journal of Non-Linear Mechanics}, 175:105096,
  2025.

\bibitem{Marques:2013}
W.~Marques and A.~R. M\'{e}ndez.
\newblock On the kinetic theory of vehicular traffic flow: {C}hapman-{E}nskog
  expansion versus grad's moment method.
\newblock {\em Physica A}, 392(16):3430--3440, 2013.

\bibitem{Piu:2022}
M.~Piu and G.~Puppo.
\newblock Stability analysis of microscopic models for traffic flow with lane
  changing.
\newblock {\em Networks and Heterogeneous Media}, 17(4):495--518, 2022.

\bibitem{Richards:1956}
P.~I. Richards.
\newblock Shock waves on the highway.
\newblock {\em Operations Research}, 4:42--51, 1956.

\bibitem{TosinZanella2019}
A.~Tosin and M.~Zanella.
\newblock Kinetic-controlled hydrodynamics for traffic models with
  driver-assist vehicles.
\newblock {\em Multiscale Modeling \& Simulation}, 17(2):716--749, 2019.

\bibitem{Zhang:2002}
H.~M. Zhang.
\newblock A non-equilibrium traffic model devoid of gas-like behavior.
\newblock {\em Transportation Research Part B: Methodological}, 36(3):275--290,
  2002.

\end{thebibliography}
		
	\end{document}